\newcommand{\comment}[1]{}
\def\squarebox#1{\hbox to #1{\hfill\vbox to #1{\vfill}}}
\def\qed{\hspace*{\fill}
        \vbox{\hrule\hbox{\vrule\squarebox{.667em}\vrule}\hrule}\smallskip}
\newenvironment{proof}{\begin{trivlist}
  \item[\hspace{\labelsep}{\em\noindent Proof.~}]
  }{\qed\end{trivlist}}
\newtheorem{lemma}{Lemma}[section]
\newtheorem{theorem}[lemma]{Theorem}
\newtheorem{proposition}[lemma]{Proposition}
\newtheorem{corollary}[lemma]{Corollary}
\newtheorem{claim}[lemma]{Claim}
\newtheorem{observation}[lemma]{Observation}
\newtheorem{definition}[lemma]{Definition}
\def\squareforqed{\hbox{\rlap{$\sqcap$}$\sqcup$}}
\def\qed{\ifmmode\squareforqed\else{\unskip\nobreak\hfil
\penalty50\hskip1em\null\nobreak\hfil\squareforqed
\parfillskip=0pt\finalhyphendemerits=0\endgraf}\fi}
\newcommand{\nats}{\mbox{$\mathbb N$}}
\newcommand{\floor}[1]{\lfloor #1 \rfloor}
\newcommand{\supp}{\mbox{\rm Sp}}
\title{The Complexity of the Evolution of Graph Labelings}
\author{
{\sl Geir Agnarsson}\\
Department of Mathematics\\
George Mason University, MS 3F2\\
4400 University Drive\\
Fairfax, Virginia 22030\\
{\tt geir@math.gmu.edu}
\and
{\sl Raymond Greenlaw}\thanks{Ray gratefully acknowledges
Chiang Mai University for supporting this research.}\\
Department of Computer Science\\
Armstrong Atlantic State University\\
11935 Abercorn Street\\
Savannah, Georgia 31419-1997\\
{\tt raymond.greenlaw@gmail.com}
\and
{\sl Sanpawat Kantabutra}\\
The Theory of Computation Group\\
Computer Science Department\\
Chiang Mai University\\
Chiang Mai, 50200, Thailand\\
{\tt sanpawat@alumni.tufts.edu}}
\date{}
\begin{document}

\maketitle

\begin{abstract}
We study the {\sc Graph Relabeling Problem}---given an undirected,
connected, simple graph $G = (V,E)$, two labelings $L$ and $L'$ of
$G$, and label {\em flip\/} or {\em mutation\/} functions determine
the complexity of transforming or evolving the labeling $L$ into $L'$\@.  
The transformation of $L$ into $L'$ can be viewed as an evolutionary
process governed by the types of flips or mutations allowed.  The
number of applications of the function is the duration of the
evolutionary period.  The labels may reside on the vertices or the
edges. We prove that vertex and edge relabelings have closely related
computational complexities.  Upper and lower bounds on the number of
mutations required to evolve one labeling into another in a general
graph are given.  Exact bounds for the number of mutations required to
evolve paths and stars are given. This corresponds to computing
the exact distance between two vertices in the corresponding {\em Cayley graph}. 
We finally explore both vertex and edge relabeling with {\em privileged labels}, 
and resolve some open problems by providing precise characterizations of when these problems
are solvable.  Many of our results include algorithms for solving the
problems, and in all cases the algorithms are polynomial-time.  The
problems studied have applications in areas such as bioinformatics,
networks, and VLSI.
\end{abstract}

\baselineskip 3.5ex

\section{Introduction}
\label{sec:introduction}

{\em Graph labeling\/} is a well-studied subject in computer science
and mathematics, and a problem that has widespread applications,
including in many other disciplines.  Here we explore a variant of
graph labeling called the {\sc Graph Relabeling Problem} that was
first explored by~Kantabutra~\cite{Ka} and later by the authors of
this paper in~\cite{AgGrKa}. A shorter preliminary version of this
paper appeared in~\cite{SNPD-08}.
Here we present some new results and extend the results given 
in~\cite{Ka,AgGrKa,SNPD-08}.  In particular, we {\em NC\/}$^1$ reduce the
{\sc Vertex Relabeling Problem} to the {\sc Edge Relabeling Problem}
and vice versa, and provide upper and lower bounds on the complexity of
the {\sc Vertex} and {\sc Edge Relabeling Problems}, give tights
bounds on relabeling a path, and provide precise characterizations of
when instances of relabeling with privileged labels are solvable.  The
paper also includes a number of related open problems.

The problem of graph labeling has a rich and long history, and we
recommend Gallian's extensive survey for an introduction to this topic
and for a cataloging of the many different variants of labeling that
have been studied~\cite{gallian2007}.  Puzzles have always intrigued
computer scientists and mathematicians alike, and a number of puzzles
can be viewed as relabeled graphs (for example,
see~\cite{wilson1974}).  One of the most famous of these puzzles is
the so-called {\sc 15-Puzzle}~\cite{slocumsonneveld2006}.  The {\sc
15-Puzzle} consists of 15 tiles numbered from 1 to 15 that are placed
on a $4 \times 4$ board leaving one position empty. The goal is to
reposition the tiles of an arbitrary arrangement into increasing order
from left-to-right and from top-to-bottom by shifting tiles around
while making use of the open hole.  In~\cite{Ka} a generalized version
of this puzzle called the ($n \times n$)-{\sc Puzzle} was used to show
a variant of the {\sc Vertex Relabeling Problem with Privileged
Labels} is {\em NP}-complete.

Graph labeling has been studied in the context of
cartography~\cite{kakoulistollis2001,marksshieber1991}.  And, of
course, there are many special types of labelings which are of great
interest---codings~\cite{Mo}, colorings~\cite{ApHa}, and
rankings~\cite{LaYu} to name but three.  In these cases we are
typically interested in placing labels on the vertices or edges of a
graph in some constrained manner so that certain properties are met.
Such problems are usually not stated in terms of the evolutionary
process that our labeling problems fall under.  In August of~2008
Google searches of 
graph coloring, 
graph labeling, 
graph coding and 
graph ranking 
turned up 
   339,000 hits, 
10,700,000 hits,
 4,060,000 hits and 
 5,640,000 hits respectively.  All of these fields have ongoing research.

The {\sc Graph Relabeling Problem} is not only interesting in its own
right but also has applications in several areas such as
bioinformatics, networks, and VLSI\@.  New applications for such work
are constantly emerging, and sometimes in unexpected contexts.  For
instance, the {\sc Graph Relabeling Problem} can be used to model a
{\em wormhole routing\/} in processor networks in which one-byte
messages called {\em flits}~\cite{wilkinsonallen1999} are sent among
processors. In this example each processor has a limited buffer, one
byte, and the only way to send a message is by exchanging it with
another processor.  Other well-known problems, for example, the {\sc
Pancake Flipping Problem}, can be modeled as a special case of our
problem~\cite{GaPa}.
\begin{figure}
\centering
\centerline{\epsfig{figure=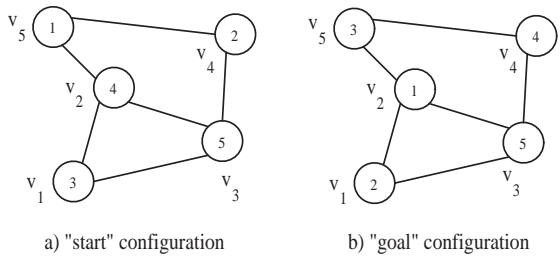,height=100pt,bbllx=0pt,bblly=0pt,bburx=251pt,bbury=118pt}}
\caption{A label relocation problem instance.}
\label{fig:probleminstance}
\end{figure}

This paper is organized as follows: $\S$\ref{sec:definitions} contains
preliminaries and definitions; $\S$\ref{sec:relate} shows the {\sc
Vertex Relabeling Problem} and the {\sc Edge Relabeling Problem} are
{\em NC\/}$^1$ reducible to each other; $\S$\ref{sec:bounds} proves
upper and lower bounds for general graphs for both the {\sc Vertex
Relabeling Problem} and the {\sc Edge Relabeling Problem};
$\S$\ref{sec:exact} contains exact bounds for relabeling a path and a
star; $\S$\ref{sec:intractable} resolves several open problems and
includes results about the {\sc Vertex Relabeling Problem with
Privileged Labels} and the {\sc Edge Relabeling Problem with
Privileged Labels}; $\S$\ref{sec:conclusions} presents concluding
remarks and open problems.  For background material on algorithms we
refer the reader to~\cite{CoLeRiSt}, for graph theory to~\cite{AgGr},
and for basic notations of complexity theory including reducibility
to~\cite{GrHoRu}.

\section{Preliminaries and Problem Definitions}
\label{sec:definitions}

Let $\nats = \{ 1, 2, \ldots \}$ denote the set of the natural numbers.  
Throughout the paper let $G = (V,E)$ be
a simple, undirected, and connected graph.  Let $n = |V|$ and $m =
|E|$; let $V = \{ v_1, \ldots, v_n \}$ and $E = \{ e_1, \ldots, e_m\}$.  
The {\em line graph of\/} $G = (V,E)$ is the graph $L(G) = (E,E')$, 
where $E' = \{ \{e_1, e_2 \} \mid e_1, e_2 \in E \mbox{ and }
e_1 \cap e_2 \neq \emptyset \}$ that is, in the line graph edges
from the original graph become vertices and two of these new vertices
are connected if they share an endpoint in the original graph.  Sometimes
we use $V(G)$ and $E(G)$ to denote the vertices and edges of the graph 
$G$ respectively.

Let $S_V, S_E \subseteq\nats$\@.  A {\em labeling $L_V$ of $V$\/} is a
mapping $L_V: V \mapsto S_V$\@.  A {\em labeling $L_E$ of $E$\/} is a
mapping $L_E: E \mapsto S_E$\@.  In this paper we are usually
interested in $S_V = \{ 1, 2, \ldots, n \}$ and 
$S_E = \{ 1, 2, \ldots, m \}$.  We associate a graph $G$ with labelings using angle
bracket notation, for example, $\langle G, L_V, L_E \rangle$ denotes
the graph $G$ with vertex labeling $L_V$ and edge labeling $L_E$\@.  
A {\em flip\/} or {\em mutation function $f$\/} 
maps triples $\langle G,L_V, L_E \rangle$ to triples $\langle G, L'_V, L'_E \rangle$, or
ordered pairs if we are only interested in one labeling.

We study both vertex and edge mutation functions.  In general, the
mutation function $f$ will be defined based on various properties of
$G$\@.  Here we study just restricted classes of mutation functions.
First, we define the {\em consecutive vertex mutation function},
where $f$ maps a pair $\langle G, L_V \rangle$ to a pair 
$\langle G, L'_V \rangle$, so $(f\circ L_V)(v_i) = f(L_V(v_i)) = L'_V(v_i)$ 
for each $i$, with the following conditions:
\begin{enumerate}
  \item $L_V = L'_V$, except on two vertices $u$ and $w$
  \item $\{ u, w \} \in E$
  \item $L_V(u) = L'_V(w)$ and $L_V(w) = L'_V(u)$
  \item $S_V = \{ 1, 2, \ldots, n \}$
  \item $f$ is a bijection
\end{enumerate}
That is, the labels on the adjacent nodes $u$ and $w$ are swapped,
while all other labels remain the same.  In addition, the set of
labels are chosen from $\{ 1, 2, \ldots, n\}$, and since the
definition requires $f$ to be a bijection, labels are used exactly
once.  It would be interesting to study other types of mutation
functions where, for example, labels along an entire path are mutated,
or where labels can be reused.  One application of the function $f$ is
called a {\em flip} or {\em mutation\/}. We next define a decision
problem that captures the complexity of the evolution of one labeling
into another via the consecutive vertex mutation function.
\begin{definition}{\bf (}{\sc Vertex Relabeling Problem}{\bf)}\\
\label{def:vertexrelabeing}
\hspace*{-.11in}
{\sc Instance:} A graph $G$, labelings $L_V$ and $L'_V$, and $t \in \nats$.\\
{\sc Question:} Can labeling $L_V$ evolve into $L'_V$ in
$t$ or fewer vertex mutations?
\end{definition}

We can similarly define the {\em consecutive edge mutation function},
where $L_E = L'_E$ except for two edges whose labels have been
swapped, and $S_E = \{ 1, 2, \ldots, m \}$.  Note, when employing the
consecutive edge mutation function, the edges whose labels are swapped
must share an endpoint.  We have the following analogous decision
problem for edge relabelings.
\begin{definition}{\bf (}{\sc Edge Relabeling Problem}{\bf)}\\
\label{def:edgerelabeing}
\hspace*{-.11in}
{\sc Instance:} A graph $G$, labelings $L_E$ and $L'_E$,  and $t \in \nats$.\\
{\sc Question:} Can labeling $L_E$ evolve into $L'_E$ in
$t$ or fewer edge mutations?
\end{definition}

In the remainder of the paper we focus on the consecutive versions of
the mutation functions.  The word `consecutive' refers to the fact
that only neighbors can be mutated, that is, labels to be swapped
appear consecutively in the graph.

\section{Relating Vertex and Edge Relabeling}
\label{sec:relate}

The following theorem shows that the computational complexities of the
{\sc Vertex Relabeling Problem} and the {\sc Edge Relabeling Problem}
are closely related.  In the theorem we use {\em NC\/}$^1$ many-one
reducibility---a weak form of reducibility; and therefore, one that
shows a very close relationship between problems---to relate the {\sc
Vertex} and {\sc Edge Relabeling Problems}.
\begin{theorem}{\bf (}{\sc Vertex/Edge Relabeling Related}{\bf)}\\
\label{thm:same}
\hspace*{-.11in}
The {\sc Vertex Relabeling Problem} is NC\/$^1$ many-one reducible to
the {\sc Edge Relabeling Problem}, and the {\sc Edge Relabeling
Problem} is NC\/$^1$ many-one reducible to the {\sc Vertex Relabeling
Problem}.
\end{theorem}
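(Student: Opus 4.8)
The plan is to exhibit, in each direction, a very simple local transformation that maps an instance of one problem to an instance of the other while preserving the yes/no answer, and then to check that the map is computable by an {\em NC\/}$^1$ circuit. For the reduction from the {\sc Vertex Relabeling Problem} to the {\sc Edge Relabeling Problem}, the natural candidate is the {\em subdivision} construction: given $\langle G, L_V, L'_V, t\rangle$, build a graph $H$ by subdividing every edge $e=\{u,w\}$ of $G$ with a new vertex, so that $e$ becomes a path $u$--$x_e$--$w$. A vertex label sitting on $v\in V(G)$ is carried by the edges of $H$ incident to $v$; but to make a label uniquely attached to $v$ we instead attach a pendant edge $p_v$ to each original vertex $v$ and let $L_E$ place $L_V(v)$ on $p_v$ (the subdivision edges get fixed ``dummy'' labels drawn from the remaining label set). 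A swap of the labels of adjacent vertices $u,w$ in $G$ corresponds to moving the label on $p_u$ along $p_u$, then along $u$--$x_e$, then along $x_e$--$w$, then onto $p_w$, and symmetrically for the other label — a bounded number of edge mutations per vertex mutation. First I would pin down this correspondence and argue it is an ``if and only if'' with the time bound scaled by the (constant) per-step cost, so that $L_V$ reaches $L'_V$ in $\le t$ vertex mutations iff $L_E$ reaches $L'_E$ in $\le ct$ edge mutations.

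For the reverse reduction, from {\sc Edge Relabeling} to {\sc Vertex Relabeling}, the natural tool is the {\em line graph}: edges of $G$ become vertices of $L(G)$, and two of them are adjacent in $L(G)$ exactly when they share an endpoint in $G$ — which is precisely the adjacency condition under which a consecutive edge mutation is allowed. So $\langle G, L_E, L'_E, t\rangle$ should map to $\langle L(G), L_E, L'_E, t\rangle$, reading $L_E$ and $L'_E$ now as vertex labelings of $L(G)$. Here a single edge mutation in $G$ is literally a single vertex mutation in $L(G)$, so the transformation is answer-preserving with no rescaling of $t$. One subtlety is that the {\sc Vertex Relabeling Problem} as defined insists on $S_V=\{1,\dots,|V(L(G))|\}=\{1,\dots,m\}$ and a bijective labeling; since $S_E=\{1,\dots,m\}$ and edge labelings are bijections, this matches, so the hypotheses of Definition~\ref{def:vertexrelabeing} are met on the nose.

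Finally I would verify the complexity bound. In both directions the output graph ($H$ with its subdivision vertices and pendants, or $L(G)$) has its vertex/edge set and adjacency list determined by simple, local, uniform rules from the adjacency structure of $G$: each output vertex/edge is named by a small tuple of input objects, and membership of a pair in the output edge set is a fixed Boolean combination of a constant number of ``is $\{a,b\}\in E(G)$?'' queries. The new labelings are obtained from the old ones by a fixed relabancing rule plus assignment of dummy labels, which is again a local computation; and the new time bound is either $t$ or $ct$ for a constant $c$, computable by a depth-$O(\log)$ arithmetic/Boolean circuit. Hence each reduction is an {\em NC\/}$^1$ many-one reduction. The main obstacle I anticipate is not the circuit-depth bookkeeping but getting the first (vertex-to-edge) construction exactly right — in particular choosing the gadget (subdivision plus pendants) so that the {\em only} edge-label moves that help are the ones simulating a genuine vertex swap, i.e.\ so that no ``illegal shortcut'' through the dummy labels lets $L_E$ reach $L'_E$ faster than $ct$ steps; establishing that rigidity, and the matching lower bound on edge mutations, is where the real work lies.
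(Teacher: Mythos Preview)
Your second reduction, from {\sc Edge Relabeling} to {\sc Vertex Relabeling} via the line graph, is exactly what the paper does and is correct: a consecutive edge mutation in $G$ is literally a consecutive vertex mutation in $L(G)$ and conversely, so the bound $t$ transfers unchanged. For the first reduction the paper's construction is simpler than yours: it does \emph{not} subdivide. It merely attaches one pendant edge $\{v_i,v'_i\}$ to each vertex $v_i$, places $L_V(v_i)$ on that pendant, gives every original edge a fixed dummy label, and takes the new time bound to be $3t$; a single vertex swap across $\{u,w\}$ is then simulated by three edge mutations through that edge. Your subdivision only inserts further dummy edges between the two pendants, raising the per-swap cost above $3$ without gaining anything.

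You are right, however, that the ``rigidity'' direction is the crux, and in fact the constant-factor scaling $t\mapsto ct$ is more fragile than either your plan or the paper's brief argument suggests. Take $G=P_3$ on $u\!-\!v\!-\!w$ with $L_V=(1,2,3)$ and $L'_V=(3,2,1)$: the vertex problem needs exactly $3$ mutations. In the paper's $G'$ (three pendants $p_u,p_v,p_w$ and two original edges $e_1,e_2$) the target edge labeling is reached in $5$ mutations --- run label $1$ along $p_u,e_1,e_2,p_w$ and bring label $3$ back the same way, never touching $p_v$ --- so for $t=2$ the vertex instance is {\sc no} while the edge instance with bound $3t=6$ is {\sc yes}. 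The mechanism is precisely the ``illegal shortcut'' you anticipated: a real label can travel through the dummy-labelled original edges from one pendant to a \emph{non-adjacent} pendant without ever visiting the intermediate pendant, so an edge solution need not decompose into $c$-move blocks that mimic individual vertex swaps. The same shortcut appears with your subdivided gadget (the direct route costs $9$ while three simulated swaps would cost $15$, so $t=2$ again separates the two instances). Hence the plan ``pendant gadget plus a fixed multiplicative blow-up of $t$'' does not, as it stands, give a correct many-one reduction; closing this gap requires either a gadget in which the dummy edges cannot serve as a through-route between non-adjacent pendants, or an equivalence argument that does not rely on a bare constant rescaling of $t$.
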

\begin{proof}
We first show that the {\sc Vertex Relabeling Problem} is {\em
NC\/}$^1$ many-one reducible to the {\sc Edge Relabeling Problem}.
Consider an instance $G = (V,E)$, $L_V$ and $L'_V$, and $t \in\nats$
of the {\sc Vertex Relabeling Problem}.  Let $v'_i$, $1 \leq i \leq
n$, be $n$ new vertices.  We construct a corresponding instance of the
{\sc Edge Relabeling Problem} $G' = (V \cup \{ v'_1, \ldots, v'_n \},
E \cup \{ \{ v_i, v'_i \} \mid 1 \leq i \leq n \})$, $L_E$ is such
that $\{ v_i, v'_i \}$ gets $L_V(v_i)$ for $1 \leq i \leq n$ and
$L_E(e_i) = i + m$ for $e_i \in E$, $L'_E$ is such that $\{ v_i, v'_i
\}$ gets $L'_V(v_i)$ for $1 \leq i \leq n$ and $L'_E(e_i) = i + m$ for
$e_i \in E$, and the mutation bound is $3t$.

We now argue the correctness of this reduction.  If we have a {\sc
yes} instance of the {\sc Vertex Relabeling Problem}, then it is clear
that the answer to the resulting instance of the {\sc Edge Relabeling
Problem} is also {\sc yes} since each mutation in $G$ can be mimicked
by three mutations in $G'$\@.  That is, suppose labels $L(v_k)$ and
$L(v_l)$ are mutated.  The following three mutations mimic this swap:
\begin{enumerate}
  \item $L_E( \{ v_k, v'_k \})$ with $L_E(\{ v_k, v_l \})$
  \item new label of $\{v_k, v_l \}$, which is $L_E(\{ v_k, v'_k \})$, with
$L_E( \{ v_l, v'_l \})$
  \item new label of $\{ v_k, v_l \}$, which is $L_E( \{ v_l, v'_l \})$, with new label
of $\{ v_k, v'_k \}$, which is $L_E( \{ v_k, v_l \})$
\end{enumerate}

In the other direction, suppose we have a {\sc yes} instance of the
{\sc Edge Relabeling Problem}.  By construction of $L'_E$ the labels
on the original edges of $G$ remain the same in $L_E$ and $L'_E$\@.
Thus, any movement of labels in $G'$ between the edges $\{ v_k, v'_k
\}$ and $\{ v_l, v'_l \}$, where the edge $\{ v_k, v_l \} \in E'$,
requires a minimum of three mutations to swap the labels on these two
edges and to restore the label on $\{ v_k, v_ l \}$.  Thus, the
corresponding instance of the {\sc Vertex Relabeling Problem} also has
a {\sc yes} answer.

It is not hard to see that if each edge knows its number as part of
the input, then the reduction can be accomplished in {\em NC\/}$^1$
because addition is in the class {\em AC\/}$^0$ which is contained in
{\em NC\/}$^1$.

Now we show that the {\sc Edge Relabeling Problem} is {\em NC\/}$^1$
many-one reducible to the {\sc Vertex Relabeling Problem}.  Consider
an instance $I_E$ of the {\sc Edge Relabeling Problem}, where $G =
(V,E)$, $L_E$ and $L'_E$ are labelings, and $t \in\nats$\@.  We
construct an instance $I_V$ of the {\sc Vertex Relabeling Problem}
using the line graph $L(G) = (E, E')$, $L_{V(L(G))}(e_i) =L_E(e_i)$
for $1 \leq i \leq m$, $L'_{V(L(G))}(e_i) = L'_E(e_i)$ for $1 \leq i
\leq m$, and the bound $t$.

We argue the correctness of the reduction.  Since for each mutation in the
instance $I_E$ of the edges, there is a corresponding mutation of the
vertices in the instance $I_V$, we see that $I_E$ is a {\sc yes}
instance of the {\sc Edge Relabeling Problem} if and only if $I_V$ is
a {\sc yes} instance of the {\sc Vertex Relabeling Problem}.

The reduction can be accomplished in {\em NC\/}$^1$.
This completes the proof of the theorem.
\end{proof}
Notice in the first reduction, we relied on the input being coded so
that each edge ``knows'' its own number.  Without having the input
encoded in some suitable fashion that provides this information, it is
not clear that the reduction is in {\em NC\/}$^1$, however, the
reduction could still be performed in {\em NC\/}$^2$.
Theorem~\ref{thm:same} demonstrates a close relationship between the
{\sc Vertex Relabeling Problem} and the {\sc Edge Relabeling Problem},
when the mutation functions are the consecutive versions.  The theorem
comes in handy when proving results about the {\sc Edge Relabeling
Problem} based on facts about the {\sc Vertex Relabeling Problem}.

\section{Tight Bounds for the Relabeling Problem}
\label{sec:bounds}

This section contains several theorems about the time complexity of
the {\sc Vertex/Edge Relabeling Problems}.  Theorem~\ref{thm:upper}
shows that for an arbitrary graph and two arbitrary labelings at most
$n(n-1)/2$ mutations are required to evolve one vertex labeling into
another.  Corollary~\ref{cor:upper} shows that a similar statement can
be made about the {\sc Edge Relabeling
Problem}. Observation~\ref{obs:lower} yields a lower bound on the
number of mutations required in evolving either vertex-labeled graphs
or edge-labeled graphs.

We begin with the upper bound on the number of flips required to
evolve any given vertex labeling into any other labeling.
\begin{theorem}{\bf (}{\sc Vertex Relabeling Upper Bound}{\bf)}\\
\label{thm:upper}
\hspace*{-.11in}
Let $G = (V,E)$ be a graph, $L_V$ and $L'_V$ vertex labelings, and $t
= n(n-1)/2$, then the answer to the {\sc Vertex Relabeling Problem} is
{\sc yes}.  That is, any labeled graph can evolve into any other
labeled graph in at most $n(n-1)/2$ mutations.
\end{theorem}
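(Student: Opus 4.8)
The plan is to induct on $n = |V|$, using connectivity to find a convenient vertex at which to ``park'' the correct label and then recurse on a smaller graph. Here is the strategy in the order I would carry it out. Since $G$ is connected, it has a spanning tree $T$; pick any leaf $\ell$ of $T$, and let $k = L'_V(\ell)$ be the label that label $\ell$ is supposed to end up with. The first task is to bring the vertex currently carrying label $k$ to $\ell$. Since $T$ is connected, there is a path from the vertex currently labeled $k$ to $\ell$; by repeatedly swapping label $k$ along the edges of that path, we can move label $k$ onto $\ell$ in at most $n-1$ mutations (the path has length at most $n-1$). Now $\ell$ has its final label; since $\ell$ is a leaf of $T$, the only $T$-edge incident to $\ell$ will never be used again, so we may freeze $\ell$ and recurse on the graph $G' = G - \ell$ restricted to (at least) the spanning tree $T - \ell$, which is connected on $n-1$ vertices, with the induced labelings. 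Note that the labels on $V \setminus \{\ell\}$ still form a bijection onto $\{1,\dots,n\}\setminus\{k\}$, which is exactly the right label set for $G'$ after a harmless relabeling.

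Summing the costs gives $t \le (n-1) + (n-2) + \cdots + 1 + 0 = \sum_{i=1}^{n-1} i = n(n-1)/2$, which is the claimed bound, with the base case $n=1$ (or $n=2$) trivial. This establishes that $L_V$ can evolve into $L'_V$ in at most $n(n-1)/2$ mutations, so the answer to the {\sc Vertex Relabeling Problem} with $t = n(n-1)/2$ is {\sc yes}.

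The one point that needs care — and the main obstacle I would watch — is the bookkeeping at the recursive step: after routing label $k$ to the leaf $\ell$, the remaining configuration on $V \setminus \{\ell\}$ uses the label set $\{1,\dots,n\} \setminus \{k\}$ rather than $\{1,\dots,n-1\}$, so strictly speaking one must compose with a fixed order-preserving bijection from $\{1,\dots,n\}\setminus\{k\}$ to $\{1,\dots,n-1\}$ to land in the exact format of Definition~\ref{def:vertexrelabeing} before invoking the inductive hypothesis; since this relabeling is applied identically to both the source and target labelings, it does not change which mutation sequences are legal or how many are needed. A second, more minor point is the routing step itself: one should verify that moving label $k$ along a simple path of length $d$ takes exactly $d$ consecutive swaps and does not disturb any already-finalized vertex, which is immediate because all such vertices were removed from the graph we are working in. With those two observations in place the induction goes through cleanly, and in fact the argument shows the stronger statement that a spanning tree alone suffices, foreshadowing the exact path and star bounds of $\S$\ref{sec:exact}.
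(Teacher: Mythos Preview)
Your proof is correct and essentially identical to the paper's: both fix a spanning tree, repeatedly route the target label to a current leaf along the tree path (costing at most the number of remaining tree edges), delete that leaf, and sum the arithmetic series to get $n(n-1)/2$. The paper frames this as an iteration using the Pr\"ufer-code leaf-elimination order rather than an explicit induction, but it even remarks that any leaf ordering would work---which is precisely what your inductive version exploits; your extra care with the label-set bookkeeping is a detail the paper simply elides.
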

\begin{proof}
Let $G = (V,E)$ be any graph.  We need to consider the number of
mutations required to change an arbitrary labeling $L_V$ into an
arbitrary labeling $L'_V$.

We first construct a spanning tree $T$ of $G$\@.  Let $v_1,v_2,\ldots,
v_n$ be the fixed numbering of the vertices (not labels) that denotes
the {\em Pr\"ufer code\/} order when the leaves of $T$ are deleted
during the process of constructing a Pr\"ufer code; note, $v_j \in \{
v_i \mid 1 \leq i \leq n \}$ for $1 \leq j \leq n$.  The Pr\"ufer code
iteratively requires the lowest numbered vertex of degree one to be
removed.  Here we are not interested in the actual Pr\"ufer code
itself but rather just the leaf elimination order given by the
Pr\"ufer code (see~\cite{GrHaPe} for more on the background and
complexity of computing Pr\"ufer codes).

The idea is to transform labels from $L_V$ into their positions in $L'_V$ in
the order specified by the $v_i$'s and along the path in the spanning
tree from their starting position in $L_V$ to their final position in
$L'_V$\@.

Let $\pi$ be a permutation of $\{1,\ldots,n\}$ (presented as
${\pi}_1,\ldots, {\pi}_n$) such that $L_V(v_{{\pi}_i}) = L'_V(v_i)$
for each $i\in\{1,\ldots,n\}$.

To move $L_V(v_{{\pi}_1}) = L'_V(v_1)$ from the initial labeling to
its final position can take at most $n-1$ mutations.  Note, $v_1$ is
an initial leaf in $T$, and $T$ contains exactly $n-1$ edges.

To move $L_V(v_{{\pi}_2}) = L'_V(v_2)$ from the initial labeling to
its final position, we need at most $n-2$ mutations, since $L'_V(v_1)$
is already in its rightful place.

In general, after $i$ iterations, where all of the labels $L'_V(v_1)$
through and including $L'_V(v_i)$ are in their correct places, then,
to move $L_V(v_{{\pi}_{i+1}}) = L'_V(v_{i+1})$ to its correct place,
we need at most $n-i-1$ flips, since the remaining spanning tree
induced by the vertex set, $V(T) - \{ v_{\ell} \mid 1 \leq \ell \leq i
\}$, has exactly $n-i-1$ edges.  Note, we do not perform any flips in
locations of the tree that have already been completed.

All in all, we use at most $(n-1) + (n-2) + \cdots + 1 = n(n-1)/2$
flips to obtain $L'_V$ from $L_V$.
\end{proof}
Note that the proof of Theorem~\ref{thm:upper} is constructive and
provides the sequence of flips to evolve one labeling into
another.  We chose to use the well-known Pr\"{u}er code ordering to
place the labels into leaves first, but any other such leaf ordering
would work as well.  The complexity of the algorithm in
Theorem~\ref{thm:upper} is the complexity of computing a spanning
tree, $\theta(n + m)$, plus the complexity of computing the Pr\"ufer
code elimination order, $\theta(n)$, plus the complexity of the
flips, $\theta(n(n-1)/2)$, which overall is therefore
$\theta(n^2)$.  It is interesting to consider that in the parallel
setting we might be able to compute the sequence of flips
required for the evolution much more quickly than we could actually
execute them sequentially.  We leave this as an open problem.

Corollary~\ref{cor:upper} contains the analogous result to
Theorem~\ref{thm:upper} but for the {\sc Edge Relabeling Problem}.
\begin{corollary}{\bf (}{\sc Edge Relabeling Upper Bound}{\bf)}\\
\label{cor:upper}
\hspace*{-.15in}
Let $G = (V,E)$ be a graph, $L_E$ and $L'_E$ edge labelings, and $t =
m(m-1)/2$, then the answer to the {\sc Edge Relabeling Problem} is
{\sc yes}.  That is, any labeled graph can evolve into any other
labeled graph in at most $m(m-1)/2$ flips.
\end{corollary}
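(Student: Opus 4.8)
The plan is to derive Corollary~\ref{cor:upper} directly from Theorem~\ref{thm:upper} by exploiting the tight connection between the two problems established in Theorem~\ref{thm:same}, together with the line-graph construction used in its proof. Given an instance of the {\sc Edge Relabeling Problem} on $G = (V,E)$ with labelings $L_E$ and $L'_E$, form the line graph $L(G) = (E, E')$ and push the edge labelings forward to vertex labelings $L_{V(L(G))}$ and $L'_{V(L(G))}$ exactly as in the second reduction of Theorem~\ref{thm:same}. The line graph $L(G)$ has $m$ vertices, one for each edge of $G$, and it is connected because $G$ is connected. Hence Theorem~\ref{thm:upper} applies to $L(G)$ and guarantees that $L_{V(L(G))}$ can be evolved into $L'_{V(L(G))}$ using at most $m(m-1)/2$ consecutive vertex mutations.

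The second step is to transport this bound back to $G$. In the proof of Theorem~\ref{thm:same} it is observed that each consecutive vertex mutation in $L(G)$ corresponds to a consecutive edge mutation in $G$: two vertices of $L(G)$ are adjacent precisely when the corresponding edges of $G$ share an endpoint, which is exactly the adjacency condition required for a consecutive edge mutation. Therefore a sequence of at most $m(m-1)/2$ vertex mutations realizing the evolution in $L(G)$ translates verbatim into a sequence of at most $m(m-1)/2$ edge mutations realizing the evolution from $L_E$ to $L'_E$ in $G$. Setting $t = m(m-1)/2$, the answer to the {\sc Edge Relabeling Problem} is therefore {\sc yes}.

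I do not anticipate a serious obstacle here, since the corollary is essentially Theorem~\ref{thm:upper} applied to $L(G)$ viewed through the reduction. The one point that deserves a word of care is the claim that $L(G)$ is connected whenever $G$ is connected with at least one edge, which is a standard fact about line graphs and is needed so that Theorem~\ref{thm:upper} applies; the degenerate cases $m = 0$ and $m = 1$ are trivial since then the bound $m(m-1)/2 = 0$ and no mutation is needed. Alternatively, one could give a direct proof mirroring the argument of Theorem~\ref{thm:upper}: build a spanning tree of $L(G)$, fix a Pr\"ufer leaf-elimination order on its $m$ vertices, and move each edge label into its target position along tree paths, accumulating at most $(m-1) + (m-2) + \cdots + 1 = m(m-1)/2$ flips. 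Either route yields the stated bound, and I would present the line-graph reduction as the shorter of the two.
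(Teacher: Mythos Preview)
Your proposal is correct and takes essentially the same approach as the paper: the paper's proof simply cites Theorems~\ref{thm:upper} and~\ref{thm:same}, which amounts precisely to passing to the line graph $L(G)$ via the second reduction in Theorem~\ref{thm:same} and applying the $n(n-1)/2$ bound with $n$ replaced by $m$. Your additional remarks on the connectivity of $L(G)$ and the trivial cases $m\le 1$ are welcome clarifications that the paper leaves implicit.
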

\begin{proof}
The result follows directly from Theorems~\ref{thm:upper}
and~\ref{thm:same}.
\end{proof}

We now discuss the matching lower bounds for the bounds of $t$ given
in Theorem~\ref{thm:upper} and Corollary~\ref{cor:upper}, together
with some well-known folklorish but relevant results.

Consider the path $P_n$ on $n$ vertices. For convenience we represent
a vertex labeling of $P_n$ by a permutation $\pi$ of $\{ 1, 2, \ldots,
n \}$ which we can view as a string $s = {\pi}_1 {\pi}_2 \ldots
{\pi}_n$. For each such string $s$ let $p(s)$ be the number of {\em
inversions} (also known as {\em inversion pairs\/}) of $s$, that is,
$p(s) = |\{\{i,j\} : 1\leq i < j \leq n \mbox{ and } {\pi}_i >
{\pi}_j\}|$.  Note that each mutation reduces or increases the value
of $p(\cdot)$ by exactly one. In other words, if $s'$ is the string
obtained from $s$ by some mutation, then $|p(s') - p(s)| = 1$.  This
well-known observation is stated as a lemma in the original
treatise~\cite[p.~27]{Muir-1882} on determinants.  From this we see
that $p(s)$ is the number of flips or mutations necessary to obtain
${\pi}_1 {\pi}_2 \ldots {\pi}_n$ from $1 \: 2 \ldots
n$~\cite{Muir-1960}. This shows that the bound of
Theorem~\ref{thm:upper} is tight.
\begin{observation}{\bf (}{\sc Lower Bounds for Relabeling Graphs}{\bf)}\\
\label{obs:lower}
\hspace*{-.15in} There is a graph $G = (V,E)$, labelings $L_V$ and
$L'_V$, and $t = (n(n-1)/2) - 1$ such that the {\sc Vertex Relabeling
Problem} has an answer of {\sc no}.  That is, there exist two
labelings that require $n(n-1)/2$ mutations to evolve one into the
other.  There is a graph $H = (V', E')$, labelings $L_{E'}$ and
$L'_{E'}$, and $t = (m(m-1)/2) - 1$ such that the {\sc Edge Relabeling
Problem} has an answer of {\sc no}.
\end{observation}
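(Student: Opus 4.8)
The plan is to establish both halves of Observation~\ref{obs:lower} by exhibiting the extremal instances explicitly, using the inversion-count argument already recalled in the surrounding text.

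For the vertex case, I would take $G = P_n$, the path on $n$ vertices $v_1, v_2, \ldots, v_n$, let $L_V$ be the identity labeling $L_V(v_i) = i$, and let $L'_V$ be the reversal $L'_V(v_i) = n+1-i$. Represented as strings, $L_V$ corresponds to $s = 1\,2\,\cdots\,n$ and $L'_V$ to $s' = n\,(n-1)\,\cdots\,1$. The key observation recalled above is that on a path, each mutation (a swap of adjacent labels) changes the inversion count $p(\cdot)$ by exactly $\pm 1$, so the number of mutations needed to pass between two labelings is at least the difference in their inversion counts. Since $p(s) = 0$ and $p(s') = \binom{n}{2} = n(n-1)/2$ (every pair is inverted in the fully reversed string), evolving $L_V$ into $L'_V$ requires at least $n(n-1)/2$ mutations. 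Hence with $t = (n(n-1)/2) - 1$ the answer to the {\sc Vertex Relabeling Problem} is {\sc no}, and combined with Theorem~\ref{thm:upper} this shows $n(n-1)/2$ is exactly the number required for this pair.

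For the edge case, the cleanest route is to invoke the reduction already in hand. Apply the {\em NC\/}$^1$ reduction from the {\sc Vertex Relabeling Problem} to the {\sc Edge Relabeling Problem} from Theorem~\ref{thm:same} to the vertex instance above; however, since that reduction multiplies the mutation bound by $3$, a more direct construction is cleaner: take $H = P_{m+1}$, the path on $m+1$ vertices with $m$ edges $e_1, \ldots, e_m$ listed along the path, let $L_{E'}$ assign $e_i \mapsto i$ and $L'_{E'}$ assign $e_i \mapsto m+1-i$. By the same inversion argument applied to the line graph $L(P_{m+1}) = P_m$ (a path on the $m$ edges), each edge mutation swaps labels on two consecutive edges and changes the inversion count by $\pm 1$, so evolving $L_{E'}$ into $L'_{E'}$ takes at least $m(m-1)/2$ mutations. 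Thus $t = (m(m-1)/2)-1$ forces a {\sc no} answer.

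The only real point requiring care is the lower-bound direction of the inversion argument: one must be sure that a mutation can change $p(\cdot)$ by no more than one, which is exactly the folklore statement from~\cite{Muir-1882,Muir-1960} quoted just before the observation, and that on a path \emph{every} mutation is of this adjacent-transposition form (true because the only edges are between consecutive path vertices, and in the line-graph case between consecutive edges). Given those facts, the argument is immediate; I expect no genuine obstacle, only the bookkeeping of choosing the reversal permutation and checking that its inversion count is $\binom{n}{2}$ (resp. $\binom{m}{2}$).
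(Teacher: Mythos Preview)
Your proposal is correct and follows essentially the same approach as the paper: the paper also takes $P_n$ with the identity and reversed labelings, invokes the inversion-count argument to get the $n(n-1)/2$ lower bound, and for the edge case simply says ``the case for edges is similar,'' which is exactly your direct construction on $P_{m+1}$. Your write-up is in fact more explicit than the paper's one-line dismissal of the edge case.
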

\begin{proof}
For the permutations $1 \: 2 \ldots n$ and $n \: (n-1) \ldots 1$
(viewed as strings), we clearly have $p(1 \: 2 \ldots n) = 0$ and $p(n
\: (n-1) \ldots 1) = \binom{n}{2} = n(n-1)/2$.  Hence, at least
$n(n-1)/2$ consecutive flips are needed to obtain $n \: (n-1) \ldots
1$ from $1 \: 2 \ldots n$.  The case for edges is similar.
\end{proof}
{\sc Remark:} When we view a labeling of the path $P_n$ on $n$
vertices as a string $s = {\pi}_1 \: {\pi}_2 \ldots {\pi}_n$, we note
that the transformation of $s$ to $1 \: 2 \ldots n$ strongly resembles
standard {\em bubble sort\/}---the simplest of the sorting algorithms on $n$
elements (see~\cite[p.~108]{Knuth3} for discussion and analysis).  In
the case when evolving the string $n \: (n-1) \ldots 1$ to 
$1 \: 2 \ldots n$, the sequence of flips or mutations is precisely the procedure of
bubble sort, except for the very last iteration.

\section{Exact Computations for the Star}
\label{sec:exact}

In this section we determine exactly how many flips are needed to
transform one vertex labeling $L_V$ of $G = (V,E)$ to another vertex
labeling $L'_V$ when $G = K_{1,n-1}$ is the {\em star\/} on $n$
vertices. Considering the cases where $G$ is firstly a path and
secondly a star seems like a good starting point since these
constitute the simplest trees: the path having the largest diameter
(of $n-1$, and smallest maximum degree of two) and the star having the
smallest diameter (of two, and the largest maximum degree of $n-1$).

The case when $G = P_n$, the simple path on $n$ vertices, is a
well-known classic result. Although the statements of these well-known
results for the path are contained in the original work by Thomas
Muir~\cite{Muir-1882} and the expanded and edited
version~\cite{Muir-1960}, the proofs are folklorish or scattered
throughout the literature at best. Hence, in what follows we provide
self-contained proofs of them in our notation.  
Later on these methods for the path will also
be referred to in the case when $G$ is the $n$-star. The case
for the star has also been investigated before in this context, in
particular, in~\cite{Akers-Kris} and from an algorithmic point of view
in~\cite{Misic-thesis} and~\cite{Misic-IEEE}, all nice and interesting
papers on how this applies to connectivity in computer networks.  In
this section we will generalize these results and show how some of
their results follow from ours as special cases.

Consider the transformation of one labeling of the path $P_n$ into
another.  It is clear that the minimum number of mutations needed to
evolve $s = {\pi}_1 \: {\pi}_2 \ldots {\pi}_n$ into $s' = 1 \: 2
\ldots n$ is the same as the minimum number of evolving $s'$ into
$s$. Hence, for the sake of simplicity, we will assume that we are to
evolve $s$ into $s'$. A {\em flip} or {\em mutation sequence\/} $(s_i)_{i=0}^m$ is a
sequence of strings with $s_0 = s$, $s_m = s'$, and where $s_{i+1}$ is
obtained from $s_i$ by a single mutation, $0 \leq i \leq m - 1$.  In
this case we see that for an arbitrary labeling $s = {\pi}_1 \:
{\pi}_2 \ldots {\pi}_n$, we have
\begin{equation}
\label{eqn:path-upper}
p(s) =  |p(s_0) - p(s_m)| = \left| \sum_{i=0}^{m-1}(p(s_i) - p(s_{i+1}))\right|
\leq  \sum_{i=0}^{m-1} |p(s_i) - p(s_{i+1})| = m,
\end{equation}
reestablishing what we know that at least $p(s)$ mutations are needed
to evolve $s$ into $s'$.

By induction on $n$, it is easy to see that $p(s)$ mutations {\em
suffice\/} to evolve $s$ to $s'$: this claim is clearly true for
$n=2$.

Assume that this assertion is true for length $(n-1)$-strings, and let
$s = {\pi}_1 \: {\pi}_2 \ldots {\pi}_n$ be such that $n = {\pi}_i$,
for a fixed $i$, $1\leq i\leq n$.  In this case we have $p(s) = n-i +
p(\hat{s})$, where $\hat{s} = {\pi}_1\ldots
{\pi}_{i-1}{\pi}_{i+1}\ldots {\pi}_n$.  Clearly, in $s$ we can move $n
= {\pi}_i$ to the rightmost position by precisely $n-i$ mutations.  By
induction, we can obtain $1\: 2\ldots (n-1)$ from $\hat{s}$ by
$p(\hat{s})$ mutations. Hence, we are able to evolve $s$ into $s'$
using $p(s)$ mutations.

Finally, we note that if we have two vertex labelings $L_V$ and $L'_V$
of the vertices of the path $P_n$, we can define the corresponding
{\em relative parameter\/} $p(L_V,L'_V)$ as $p(s)$, where $s$ is the
unique permutation obtained from $L_V$ by renaming the labels in
$L'_V$ from left-to-right as $1, 2, \ldots, n$ and reflecting these
new names in $L_V$.  By our previous comment, we have the symmetry
$p(L_V,L'_V) = p(L'_V,L_V)$. This well-known result can now be stated
in our notation as follows.
\begin{observation}{\bf (}{\sc Tight Bound on Path Relabeling Complexity}{\bf)}\\
\label{obs:tightpath}
\hspace*{-.11in}
Let $P_n$ be the path on $n$ vertices, $L_V$ and
$L'_V$ vertex labelings, and $t\in\nats$. Then the answer to the {\sc
Vertex Relabeling Problem} for $P_n$ is {\sc yes} if and only if
$t\geq p(L_V,L'_V)$.
\end{observation}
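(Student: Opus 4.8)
The plan is to assemble the proof of Observation~\ref{obs:tightpath} directly from the two halves of the argument that have already been developed in the surrounding text, and then to unwind the definition of $p(L_V,L'_V)$ so that the general statement reduces to the special case $L'_V = 1\:2\ldots n$. First I would fix the reformulation: by the reduction described just before the observation, if $s$ is the permutation obtained from $L_V$ by renaming the labels of $L'_V$ from left-to-right as $1,2,\ldots,n$ and pushing those names through $L_V$, then a mutation on $\langle P_n, L_V\rangle$ corresponds exactly to a mutation on the string $s$ (a swap of two consecutive entries), and the target labeling $L'_V$ corresponds to the identity string $1\:2\ldots n$. So the number of mutations needed to evolve $L_V$ into $L'_V$ equals the number of mutations needed to evolve $s$ into $1\:2\ldots n$, and it suffices to prove the observation for that normalized instance.

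Next I would prove the two directions. For the lower bound ($t \geq p(L_V,L'_V)$ is necessary), I would invoke the inequality~\eqref{eqn:path-upper}: every single mutation changes the inversion count $p(\cdot)$ by exactly one, so along any mutation sequence of length $m$ from $s$ to $1\:2\ldots n$ we get $p(s) = |p(s_0) - p(s_m)| \leq \sum_{i=0}^{m-1}|p(s_i) - p(s_{i+1})| = m$; hence no mutation sequence shorter than $p(s) = p(L_V,L'_V)$ can exist, so a {\sc yes} answer forces $t \geq p(L_V,L'_V)$. For the upper bound (sufficiency), I would cite the induction on $n$ already carried out in the text: the base case $n=2$ is immediate, and for the inductive step, writing $n = \pi_i$ in $s = \pi_1\:\pi_2\ldots\pi_n$, one moves $n$ to the rightmost position in exactly $n-i$ mutations, observes that the remaining string $\hat s$ satisfies $p(s) = (n-i) + p(\hat s)$, and applies the inductive hypothesis to sort $\hat s$ into $1\:2\ldots(n-1)$ in $p(\hat s)$ mutations, for a total of $p(s)$. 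Combining, $p(L_V,L'_V)$ mutations both suffice and are necessary, so the answer is {\sc yes} precisely when $t \geq p(L_V,L'_V)$.

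Finally I would add the symmetry remark: since renaming the labels of $L'_V$ to $1,\ldots,n$ and renaming the labels of $L_V$ to $1,\ldots,n$ produce mutually inverse permutations, and a permutation and its inverse have the same number of inversions, we get $p(L_V,L'_V) = p(L'_V,L_V)$, which also matches the earlier observation that the minimum number of mutations from $L_V$ to $L'_V$ equals the minimum from $L'_V$ to $L_V$.

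I do not expect any genuine obstacle here: essentially all the content is already present in the text preceding the observation, and the only real work is bookkeeping — being careful that the relabeling-to-normal-form step is a faithful bijection on mutation sequences, so that "at least $p(s)$ and at most $p(s)$" transfers from the identity-target case to the arbitrary-target case. If anything is delicate it is making explicit that the correspondence $L_V \mapsto s$ sends adjacent-swap mutations in $P_n$ to adjacent-swap mutations of the string and back, which is immediate because $P_n$'s edge set is exactly the set of consecutive index pairs; I would state this explicitly rather than leave it implicit.
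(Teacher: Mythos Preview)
Your proposal is correct and matches the paper's approach essentially exactly: the observation is presented there as a summary of the preceding discussion, and you have correctly identified and assembled the two pieces (the lower bound from~(\ref{eqn:path-upper}) and the inductive upper bound) together with the relabeling-to-normal-form reduction that defines $p(L_V,L'_V)$. Your added care in spelling out why the renaming step is a bijection on mutation sequences is a small improvement in explicitness over the paper, but the content is the same.
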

Finally, note that by Observation~\ref{obs:tightpath} we can always
evolve $L_V$ into $L'_V$ using the minimum of $p(L_V,L'_V)$ mutations,
and repeating the last mutation (or any fixed mutation for that
matter!)  $2k$ times is not going to alter $L'_V$, since repeating a
fixed mutation an even number of times corresponds to the identity (or
neutral) relabeling. Hence, for any nonnegative integer $k$ one can
always evolve $L_V$ into $L'_V$ using $t = p(L_V,L'_V) + 2k$
mutations.

We will now verify that if $L_V$ can evolve into $L'_V$ in $t$
mutations, then $t - p(L_V,L'_V)$ must be even. By renaming the
labels, we may assume $L_V$ is given by the string $s = {\pi}_1 \:
{\pi}_2\ldots {\pi}_n$ and $L'_V$ by the string $s' = 1 \: 2 \ldots
n$.  Now let $(s_i)_{i=0}^m$ and $(s'_i)_{i=0}^{m'}$ be two mutation
sequences with $s_0 = s'_0 = s$ and $s_m = s'_{m'} = s'$.  Since
$p(s_0) = p(s'_0) = p(s)$ and $p(s_m) = p(s'_{m'}) = 0$, we have
\[
p(s) = p(s_0) - p(s_m) = \sum_{i=0}^{m-1}(p(s_i)-p(s_{i+1})) = P_{+} - P_{-},
\]
and
\[
p(s) = p(s'_0) - p(s'_{m'}) = \sum_{i=0}^{m'-1}(p(s'_i)-p(s'_{i+1})) =
P'_{+} - P'_{-},
\]
where
\begin{eqnarray*}
P_{+} & = & |\{i \in\{0,\ldots,m-1\} : p(s_i) - p(s_{i+1}) = 1\}|, \\
P_{-} & = & |\{i \in\{0,\ldots,m-1\} : p(s_i) - p(s_{i+1}) = -1\}|, \\
P'_{+} & = & |\{i \in\{0,\ldots,m'-1\} : p(s'_i) - p(s'_{i+1}) = 1\}|, \mbox{ and}\\
P'_{-} & = & |\{i \in\{0,\ldots,m'-1\} : p(s'_i) - p(s'_{i+1}) = -1\}|.
\end{eqnarray*}
In particular, we have $P'_{+} - P'_{-} = P_{+} - P_{-}$.
Since $m = P_{+} + P_{-}$ and $m' = P'_{+} + P'_{-}$, we obtain
\begin{equation}
\label{eqn:path-parity}
m'-m = (P'_{+} + P'_{-})-(P_{+} + P_{-})=(P'_{+}-P_{+})+(P'_{-}-P_{-}) = 2(P'_{+}-P_{+}),
\end{equation}
and thus $m$ and $m'$ must have the same parity.  This result shows that if
$L_V$ is evolved into $L'_V$ in exactly $t$ mutations, then $t - p(s)$
must be even. This proves the following well-known fact about
permutations, which in our setting reads as follows.
\begin{theorem}[Muir]
\label{thm:path-complete}
Let $P_n$ be the path on $n$ vertices, $L_V$ and $L'_V$ vertex labelings,
and $t\in\nats$. Then we can evolve the labeling $L_V$ into $L'_V$
using $t$ mutations if and only if $t =  p(L_V,L'_V) + 2k$ for some
nonnegative integer $k$.
\end{theorem}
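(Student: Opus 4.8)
The plan is to prove the equivalence by normalizing and then assembling the four ingredients that were developed in the discussion leading up to the statement. First I would rename labels so that, without loss of generality, $L'_V$ is the identity string $1\,2\ldots n$ and $L_V$ is a string $s = \pi_1\,\pi_2\ldots\pi_n$, so that $p(L_V,L'_V) = p(s)$ and the relative parameter $p(\cdot,\cdot)$ reduces to the inversion count $p(\cdot)$. The symmetry $p(L_V,L'_V)=p(L'_V,L_V)$ then lets me speak freely of evolving $s$ into $1\,2\ldots n$.

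For the ``if'' direction I would show that $t=p(s)$ is achievable and that any even surplus can be padded on. Achievability of $p(s)$ mutations is an induction on $n$: the case $n=2$ is immediate, and for the inductive step, if $n=\pi_i$ then sliding the symbol $n$ from position $i$ to position $n$ uses exactly $n-i$ consecutive mutations and leaves the length-$(n-1)$ string $\hat s=\pi_1\ldots\pi_{i-1}\pi_{i+1}\ldots\pi_n$ with $p(s)=(n-i)+p(\hat s)$; applying the induction hypothesis to $\hat s$ then sorts $s$ in $p(s)$ mutations. For padding, I would note that applying any single fixed legal mutation (say, swapping the two labels on a fixed edge of $P_n$) twice in succession is the identity relabeling, so appending $k$ such pairs to an optimal sequence produces a valid mutation sequence of length $p(s)+2k$ for every nonnegative integer $k$.

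For the ``only if'' direction I would combine a lower bound with a parity constraint. The bound $t\ge p(s)$ is inequality~(\ref{eqn:path-upper}): since each mutation changes $p(\cdot)$ by exactly $\pm 1$, any mutation sequence $(s_i)_{i=0}^{t}$ from $s$ to $1\,2\ldots n$ satisfies $p(s)=|p(s_0)-p(s_t)|\le t$. For parity, I would compare this arbitrary length-$t$ sequence against a fixed optimal sequence of length $p(s)$ furnished by the previous paragraph, using the bookkeeping of~(\ref{eqn:path-parity}): writing $t=P_{+}+P_{-}$ with $p(s)=P_{+}-P_{-}$, where $P_{+},P_{-}$ count the mutations that decrease, resp.\ increase, $p(\cdot)$, gives $t-p(s)=2P_{-}$, so $t-p(s)$ is a nonnegative even integer, i.e.\ $t=p(s)+2k$ with $k\ge 0$. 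Putting the two directions together yields the claimed characterization.

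The induction and the padding observation are routine; the only step that needs genuine care is the parity argument, because it compares two \emph{distinct} mutation sequences of possibly different lengths and must extract the common invariant $P_{+}-P_{-}=p(s)$ from the telescoping sum for each of them. This is precisely the computation that produces~(\ref{eqn:path-parity}), so the ``main obstacle'' is really just organizing that bookkeeping cleanly rather than any new idea.
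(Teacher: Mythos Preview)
Your proposal is correct and follows exactly the paper's approach: normalize so that $L'_V$ is the identity string, prove achievability of $p(s)$ mutations by the same induction on $n$, pad with an even number of repeated flips, and extract both the lower bound and the parity constraint from the fact that each mutation changes $p(\cdot)$ by exactly $\pm 1$. Your parity computation $t-p(s)=2P_{-}$ from a single sequence is in fact slightly more direct than the paper's two-sequence comparison in~(\ref{eqn:path-parity}), but the content is identical.
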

{\sc Remark:} In many places in the literature (especially in books on
abstract algebra), a permutation of $\{1,2,\ldots,n\}$ that swaps two
elements $i\leftrightarrow j$ is called a {\em transposition\/} or a
{\em 2-cycle\/} and is denoted by $(i,j)$.  If $i<j$, then a flip or
mutation in our context is a transposition where $j=i+1$.  In general,
by first moving $j$ to the place of $i$ and then moving $i$ up to the
place of $j$, we see that $(i,j)$ can be obtained by exactly
$2(j-i)-1$ mutations. Since every permutation $\pi$ of $\{1,2,\ldots,n\}$
is a composition of transpositions, say $t$ of
them, then $\pi$ can be obtained from $1\: 2\ldots n$ by $N$
mutations, where $N$ is a sum of $t$ odd numbers.  By
Theorem~\ref{thm:path-complete}, we therefore have that $p(\pi) \equiv
N \equiv t \pmod{2}$. This result gives an alternative and more
quantitative proof of the classic group-theoretic fact that the parity
of the number of transpositions in a composition that yields a given
permutation is unique and only depends on the permutation itself
(see~\cite[p.~48]{Hung} for the classic proof).

\vspace{3 mm}

We now discuss the case $G = K_{1,n-1}$, the star on $n$ vertices. For
our general setup, let $V(G) = \{v_0,v_1,\ldots,v_{n-1}\}$ and $E(G) =
\{\{v_0,v_i\} : i = 1,2,\ldots,n-1\}$, so we assume that $v_0$ is the
center vertex of our star $G$\@. If $L_V$ and $L'_V$ are two vertex
labelings of $G$, we may (by renaming the vertices\/) assume
$L'_V(v_i) = i$ for each $i\in\{0,1,\ldots, n-1\}$. In this case the
initial labeling is given by $L_V(v_i) = \pi(i)$, where $\pi$ is a
permutation of $\{0,1,\ldots,n-1\}$, and so $\pi \in S_n$, the {\em
symmetric group\/} on $n$ symbols $\{0,1,\ldots,n-1\}$ in our case
here.  Call the set of the elements moved by $\pi$ the {\em support\/}
of $\pi$, denote this set by $\supp(\pi)$, and let 
$|\supp(\pi)| = |\pi|$ be its cardinality.  If $\pi$ has the set $S$ as it support,
then we say that $\pi$ is a permutation {\em on\/} $S$ (as supposed to
a permutation of $S$).  Recall that a {\em cycle\/} $\sigma\in S_n$ is
a permutation such that $\sigma(i_{\ell}) = i_{\ell + 1}$ for all
$\ell = 1,\ldots, c-1$, and $\sigma({i_c}) = i_1$, where
$\supp(\sigma) = \{i_1,\ldots,i_c\}\subseteq \{0,1,\ldots,n-1\}$ is
the support of the cycle, so $|\sigma| = c$ here. Such a cycle
$\sigma$ is denoted by $(i_1,\ldots, i_c)$.  Each permutation 
$\pi\in S_n$ is a product of disjoint cycles $\pi =
\sigma_1\sigma_2\cdots\sigma_k$ (see~\cite[p.~47]{Hung}), and this
product/composition is unique. (Note that every two disjoint cycles
commute as compositions of maps $\{0,1,\ldots,n-1\} \rightarrow
\{0,1,\ldots,n-1\}$).  For each permutation $\pi$, denote its number
of disjoint cycles by $\varsigma(\pi)$.  Note that for the star $G$
every mutation or flip has the form $f_i$, where $f_i$ swaps the
labels on $v_0$ and $v_i$ for $i\in\{1,2,\ldots,n-1\}$. Hence, we have
$f_i = (0,i)$, the 2-cycle transposing $0$ and $i$.
\begin{lemma}
\label{lmm:cycle-upper}
Let $G = K_{1,n-1}$ be the star on $n$ vertices.  Let $L_V$ and $L'_V$
be vertex labelings such that $L_V(v_i) = \sigma(i)$ and $L'_V(v_i) =
i$, where $\sigma$ is a cycle with $\supp(\sigma)\subseteq
\{1,2,\ldots,n-1\}$. In this case the labeling $L_V$ can be
transformed into $L'_V$ in $|\sigma| + 1$ or fewer flips.
\end{lemma}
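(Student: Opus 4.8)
The plan is to translate the problem into the language of the symmetric group and then exhibit an explicit flip sequence of length $|\sigma|+1$ that uses the center vertex $v_0$ as a scratch buffer. First I would record the elementary correspondence between flips and transpositions: encode the current labeling by the permutation $\rho$ for which $\rho(i)$ is the label presently on $v_i$, so that the target labeling corresponds to the identity and the initial one to $\rho_0 = \sigma$. Performing the flip $f_j$, which swaps the labels on $v_0$ and $v_j$, replaces $\rho$ by $\rho\cdot(0,j)$. Hence a sequence $f_{j_1},\dots,f_{j_t}$ transforms $L_V$ into $L'_V$ exactly when $(0,j_1)(0,j_2)\cdots(0,j_t)=\sigma^{-1}$, and the lemma reduces to factoring the $|\sigma|$-cycle $\sigma^{-1}$ into at most $|\sigma|+1$ transpositions of the ``star'' form $(0,j)$ with $j\in\{1,\dots,n-1\}$. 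This is legitimate precisely because $\supp(\sigma^{-1})=\supp(\sigma)\subseteq\{1,\dots,n-1\}$ avoids $0$.

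To build such a factorization I would argue directly rather than algebraically. Write $\sigma=(a_1,a_2,\dots,a_c)$ with $c=|\sigma|$ and all $a_\ell\in\{1,\dots,n-1\}$, so initially the vertex $v_{a_\ell}$ carries label $a_{\ell+1}$ (indices read cyclically), $v_0$ carries $0$, and every other vertex already carries its own label. I would then apply, in order, the flips $f_{a_c}$, then $f_{a_1},f_{a_2},\dots,f_{a_{c-1}}$, and finally $f_{a_c}$ once more: the opening $f_{a_c}$ lifts the stray label $a_1$ onto the center and empties $v_{a_c}$; each subsequent $f_{a_\ell}$ deposits the correct label $a_\ell$ on $v_{a_\ell}$ while hoisting $a_{\ell+1}$ onto the center; and the closing $f_{a_c}$ deposits $a_c$ on $v_{a_c}$ and returns $0$ to $v_0$. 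Tracking the label on $v_0$ after each step (an immediate induction) shows that all $n$ vertices end with their target labels, and the total number of flips is $1+(c-1)+1 = c+1 = |\sigma|+1$. The ``or fewer'' phrasing is then automatic, since this is only an upper bound (and $0$ flips trivially suffice when $\sigma$ is the identity).

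If an algebraic presentation is preferred, the same bound follows from the star-transposition identity $(a_1,\dots,a_c) = (0,a_1)(0,a_c)(0,a_{c-1})\cdots(0,a_2)(0,a_1)$, which one obtains by writing each factor of $(a_1,a_c)(a_1,a_{c-1})\cdots(a_1,a_2)$ in the form $(0,a_1)(0,a_j)(0,a_1)$ and cancelling the adjacent pairs $(0,a_1)(0,a_1)$; applied to $\sigma^{-1}$ this yields exactly $c+1$ star transpositions.

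The one point that needs care is the bookkeeping: one must pin down the composition convention (left- versus right-multiplication when a flip is applied to $\rho$) and keep firmly in mind that $0\notin\supp(\sigma)$, since that is precisely what guarantees the final flip restores $0$ to the center instead of leaving residual disorder on the star. I expect this consistency check to be the only place a careless argument could slip; the remaining verification is routine.
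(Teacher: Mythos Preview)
Your proposal is correct and follows essentially the same approach as the paper: both exhibit an explicit sequence of $c+1$ star flips $(0,j)$ that factors the cycle, using the center $v_0$ as a buffer. Up to a cyclic renaming of the cycle entries your sequence $f_{a_c},f_{a_1},\dots,f_{a_{c-1}},f_{a_c}$ is exactly the paper's $f_{i_1}f_{i_2}\cdots f_{i_c}f_{i_1}$, and the paper's one-line verification that this composition inverts $\sigma$ is precisely the algebraic identity you spell out at the end.
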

\begin{proof}
If $\sigma = (i_1,\ldots, i_c)$, where $\{i_1,\ldots,i_c\}\subseteq
\{1,2,\ldots,n-1\}$, then apply the composition $f_{\sigma} :=
f_{i_1}f_{i_2}\cdots f_{i_c}f_{i_1}$ to the labeling $L_V$ and obtain
$L'_V$ since
\[
f_{i_1}f_{i_2}\cdots f_{i_c}f_{i_1}\sigma = (0,i_1)(0,i_2)\cdots (0,i_c)(0,i_1)(i_1,\ldots, i_c)
\]
is the identity permutation. Since $f_{\sigma}$ consists of $c+1$
flips altogether, we have the lemma.
\end{proof}
For a cycle $\sigma$ with $\supp(\sigma)\subseteq \{1,2,\ldots,n-1\}$,
let $f_{\sigma}$ denote the composition of the $|\sigma|+1$ label flip
functions as in the previous proof.  By Lemma~\ref{lmm:cycle-upper} we
have the following corollary.
\begin{corollary}
\label{cor:perm-upper}
Let $G = K_{1,n-1}$ be the star on $n$ vertices.  Let $L_V$ and $L'_V$
be vertex labelings such that $L_V(v_i) = \pi(i)$ and $L'_V(v_i) = i$
for $i\in\{0,1,\ldots,n-1\}$ where $\pi(0) = 0$. In this case the
labeling $L_V$ can be transformed into $L'_V$ in
$|\pi|+\varsigma(\pi)$ or fewer flips.
\end{corollary}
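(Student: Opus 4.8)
The plan is to reduce the general permutation case to the single-cycle case already settled by Lemma~\ref{lmm:cycle-upper}. Since $\pi(0)=0$, the element $0$ lies outside $\supp(\pi)$, and hence outside the support of every cycle occurring in the unique disjoint-cycle decomposition $\pi=\sigma_1\sigma_2\cdots\sigma_k$. Discarding any trivial (length-one) factors, I would assume each $\sigma_j$ has length at least two and $\supp(\sigma_j)\subseteq\{1,2,\ldots,n-1\}$; with this convention $k$ equals $\varsigma(\pi)$ (or is at most $\varsigma(\pi)$ if one's convention for $\varsigma$ counts fixed points as cycles, in which case the stated bound is only weakened). For each $j$ let $f_{\sigma_j}$ denote the composition of the $|\sigma_j|+1$ flip functions produced in the proof of Lemma~\ref{lmm:cycle-upper}, so that $f_{\sigma_j}\sigma_j$ is the identity, i.e.\ $f_{\sigma_j}=\sigma_j^{-1}$ in $S_n$, and $\supp(f_{\sigma_j})\subseteq\{0\}\cup\supp(\sigma_j)$.

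Next I would apply the concatenated flip sequence $f_{\sigma_1}f_{\sigma_2}\cdots f_{\sigma_k}$ to $L_V$, which corresponds to evaluating the permutation $f_{\sigma_1}\cdots f_{\sigma_k}\,\pi=f_{\sigma_1}\cdots f_{\sigma_k}\,\sigma_1\cdots\sigma_k$. The key observation is that the $\sigma_j$ have pairwise disjoint supports, each fixes $0$, and $\supp(f_{\sigma_i})\subseteq\{0\}\cup\supp(\sigma_i)$; consequently $f_{\sigma_i}$ commutes with $\sigma_j$ for $i\neq j$, and $f_{\sigma_i}$ commutes with $f_{\sigma_j}$ for all $i,j$. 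Rearranging the product accordingly, $f_{\sigma_1}\cdots f_{\sigma_k}\,\sigma_1\cdots\sigma_k=(f_{\sigma_1}\sigma_1)(f_{\sigma_2}\sigma_2)\cdots(f_{\sigma_k}\sigma_k)$, which is the identity since every factor is. Hence the concatenated sequence transforms $L_V$ into $L'_V$.

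For the count, the sequence uses $\sum_{j=1}^{k}(|\sigma_j|+1)=\bigl(\sum_{j=1}^{k}|\sigma_j|\bigr)+k$ flips, and since the supports of the $\sigma_j$ partition $\supp(\pi)$ we have $\sum_{j=1}^{k}|\sigma_j|=|\supp(\pi)|=|\pi|$, so the total is $|\pi|+\varsigma(\pi)$ (and at most this in the other convention), giving the corollary. I expect the only delicate points to be the bookkeeping around trivial cycles — one must drop length-one factors, since naively running the Lemma's construction on a fixed point would spend two flips to no effect — and making the commutativity/disjoint-support argument precise so that the reordering of the product is justified; everything else is a direct appeal to Lemma~\ref{lmm:cycle-upper} and the uniqueness of the disjoint-cycle decomposition.
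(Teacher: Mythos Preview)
Your proposal is correct and follows essentially the same approach as the paper: decompose $\pi$ into its disjoint cycles, apply Lemma~\ref{lmm:cycle-upper} to each, and concatenate the resulting flip sequences, counting $\sum_j(|\sigma_j|+1)=|\pi|+\varsigma(\pi)$ flips in total. Your version is more explicit than the paper's about why the concatenation works (the commutativity argument) and about the handling of trivial cycles; note, incidentally, that since you already observed $f_{\sigma_j}=\sigma_j^{-1}$, you have $\supp(f_{\sigma_j})=\supp(\sigma_j)$ exactly, so the commutativity of the $f_{\sigma_i}$ with each other is immediate---though, as your own rearrangement shows, you only actually need $f_{\sigma_i}$ to commute with $\sigma_j$ for $i\neq j$.
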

\begin{proof}
If $\pi = \sigma_1\cdots\sigma_k$, a product of $k$ disjoint cycles
each having its support in $\{1,2,\ldots, n-1\}$, then apply the
composition $f_{\sigma_k}f_{\sigma_{k-1}}\cdots f_{\sigma_1}$ to the
labeling $L_V$ and obtain $L'_V$.  This composition consists of
$\sum_{i=1}^k(|\sigma_i| + 1) = |\pi| + k = |\pi| + \varsigma(\pi)$
flips altogether.
\end{proof}
Corollary~\ref{cor:perm-upper} establishes an upper bound on how many
flips are needed to transform one labeling into another. This upper
bound is the easier part and coincides with~\cite[Lemma 1,
p.~561]{Akers-Kris}.

We now consider the harder case.  In order to obtain the tight lower
bound, we will define a parameter $q(\cdot)$, a function from the set
of all possible labelings of $G$ into the set of nonnegative integers,
such that each flip either reduces or increases the parameter by
exactly one, just like the number $p(\cdot)$ of inversions of a
permutation on the path.  Before we present the formal definition of
the parameter $q$, we need some notation. For each permutation $\pi$
on $\{0,1,\ldots,n-1\}$, we define a corresponding permutation $\pi^0$
on the same set in the following way:
\begin{enumerate}
  \item If $\pi(0) = 0$, then $\pi^0 := \pi$.
  \item If $\pi(0) = i\neq 0$, then let $j\in \{1,2,\ldots,n-1\}$
        be the unique element with $\pi(j) = 0$.
        In this case we let $\pi^0 := \pi(0,j)$. 
\end{enumerate}
Note that for any permutation $\pi$ on $\{0,1,\ldots,n-1\}$ we always
have $\pi^0(0) = 0$.  If $L_V$ is a vertex labeling of the star $G$
such that $L_V(v_i) = \pi(i)$ for each $i\in\{0,1,\ldots,n-1\}$, then
let $L_V^0$ be the vertex labeling corresponding to the permutation
$\pi^0$, so $L_V^0(v_i) = \pi^0(i)$ for each
$i\in\{0,1,\ldots,n-1\}$. With this preliminary notation we can now
define our parameter.
\begin{definition}
\label{def:q}
Let $L_V : V(G) \rightarrow \{0,1,\ldots,n-1\}$ be a vertex labeling
of the star $G = K_{1,n-1}$ given by $L_V(v_i) = \pi(i)$, where $\pi$
is some permutation of $\{0,1,\ldots,n-1\}$.
\begin{enumerate}
  \item If $\pi(0) = 0$, then let $q(L_V) = |\pi| + \varsigma(\pi)$.
  \item Otherwise, if $\pi(0) = i\neq 0$ and
        hence $\pi(j) = 0$ for some $j$, then let
\[
q(L_V) =
\left\{
\begin{array}{ll}
  q(L_V^0) + 1 \mbox{ if } i = j, \\
  q(L_V^0) - 1 \mbox{ if } i\neq j.
\end{array}
\right.
\]
\end{enumerate}
\end{definition}
Note that $L_V(v_i) = i$ for each $i\in\{0,1,\ldots,n-1\}$ if and only
if $q(L_V) = 0$.

We now want to show that if $L_V$ is a vertex labeling of the star
$G$, and $L'_V$ is obtained from $L_V$ by a single flip, then $|q(L_V)
- q(L'_V)| = 1$.  First we note that if one of the labels swapped by
the single flip is zero, then we either have $L'_V = L_V^0$ or vice
versa $L_V = {L'}_V^0$. Hence, in this case we have directly by
Definition~\ref{def:q} that $|q(L_V) - q(L'_V)| = 1$.

Assume now that neither labels $i$ nor $j$ swapped by the flip is
zero.  In this case we have $L_V(v_0) = i$ and $L'_V(v_0) = j$, and
hence $L_V(v_{\ell}) = L'_V(v_{\ell}) = 0$ for some
$\ell\in\{1,2,\ldots,n-1\}$.  Let the labelings $L_V$ and $L'_V$
on $\{0,1,\ldots,n-1\}$
be given by the permutations $\pi$ and $\pi'$,
respectively.  Since $\pi(k) = j$ and $\pi'(k) = i$ for some $k\neq
\ell$ and $\pi'(\ell) = \pi(\ell) = 0$, we have $\pi' = \pi(0,k)$. 
Using the notation introduced earlier, we have $\pi^0 =
\pi(0,\ell)$ and ${\pi'}^0 = \pi'(0,\ell)$. Since $\pi =
\pi(0,\ell)(0,\ell) = \pi^0(0,\ell)$ and $(0,\ell)(0,k)(0,\ell) =
(k,\ell)$, we have
\begin{equation}
\label{eqn:pio'-pio}
{\pi'}^0 = \pi'(0,\ell) = \pi(0,k)(0,\ell) = \pi^0(0,\ell)(0,k)(0,\ell) = \pi^0(k,\ell).
\end{equation}
Note that (\ref{eqn:pio'-pio}) also implies that
${\pi'}^0(k,\ell) = \pi^0$, and so this observation yields a symmetry
$\pi^0 \leftrightarrow {\pi'}^0$ that we will use later. Also, since
$\pi^0(k) = \pi(k) = j$, $\pi^0(\ell) = \pi(0) = i$, ${\pi'}^0(k) =
\pi'(k) = i$, and ${\pi'}^0(\ell) = \pi'(0) = j$, we see that the
labeling ${L'}_V^0$ is obtained from $L_V^0$ by swapping the labels
$i$ on $v_k$ and $j$ on $v_{\ell}$.

By Definition~\ref{def:q} we have $q(L_V^0) = |\pi^0| + \varsigma(\pi^0)$,
and further by (\ref{eqn:pio'-pio}) we get the following:
\begin{equation}
\label{eqn:L'}
q({L'}_V^0) = |{\pi'}^0| + \varsigma({\pi'}^0) = |\pi^0(k,\ell)| + \varsigma(\pi^0(k,\ell)).
\end{equation}
Note that what happens with the parameter $q$ depends on whether
$\ell\in\{i,j\}$ or not.  Before we consider these cases, we dispatch
with some basic but relevant observations on permutations.
\begin{claim}
\label{clm:cc2}
Let $\sigma_1$ and $\sigma_2$ be two disjoint cycles.  If $i_1\in
\supp(\sigma_1)$ and $i_2\in\supp(\sigma_2)$, then
$\sigma_1\sigma_2(i_1,i_2)$ is a cycle on
$\supp(\sigma_1)\cup\supp(\sigma_2)$.
\end{claim}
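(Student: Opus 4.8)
The plan is to analyze how composing with the transposition $(i_1,i_2)$ merges the two disjoint cycles $\sigma_1$ and $\sigma_2$ into a single cycle on the union of their supports. First I would set up notation by writing $\sigma_1 = (a_1,\ldots,a_p)$ with $a_1 = i_1$ (cyclic rotation of the cycle is harmless), and $\sigma_2 = (b_1,\ldots,b_r)$ with $b_1 = i_2$. Since the two cycles are disjoint, as functions on $\{0,1,\ldots,n-1\}$ they commute, and $\sigma_1\sigma_2$ acts as $\sigma_1$ on $\supp(\sigma_1)$, as $\sigma_2$ on $\supp(\sigma_2)$, and as the identity elsewhere. I would adopt the same left-to-right (or right-to-left — whichever convention the paper has been using in equations like~(\ref{eqn:pio'-pio})) composition convention consistently throughout.

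The core computation is to trace the orbit of $\tau := \sigma_1\sigma_2(i_1,i_2)$ starting from any element of $\supp(\sigma_1)\cup\supp(\sigma_2)$, and check it is a single orbit of size $p+r$. Concretely, I would verify: following $\tau$ around $\supp(\sigma_1)$ behaves like $\sigma_1$ until the transposition $(i_1,i_2)$ intervenes at the step landing on $i_1 = a_1$, at which point control is handed off into $\supp(\sigma_2)$ at $i_2 = b_1$; it then runs around $\sigma_2$ until it again reaches the element that $(i_1,i_2)$ affects, handing control back to $\supp(\sigma_1)$. Writing it out, one gets the single cycle $\tau = (a_1, a_2, \ldots, a_p, b_1, b_2, \ldots, b_r)$ (up to the exact convention and a possible reversal), so in particular $\supp(\tau) = \supp(\sigma_1)\cup\supp(\sigma_2)$ and $\tau$ is a $(p+r)$-cycle. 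I would then note that no element outside $\supp(\sigma_1)\cup\supp(\sigma_2)$ is moved, since $(i_1,i_2)$ only touches elements already in the union and $\sigma_1\sigma_2$ fixes everything outside.

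The main obstacle — really the only place care is needed — is bookkeeping the composition order and the rotation choices so that the handoff happens at the right element: depending on whether one composes left-to-right or right-to-left, the transposition $(i_1,i_2)$ is applied before or after $\sigma_1\sigma_2$, which changes whether the splice point is at $i_1$ and $i_2$ themselves or at their $\sigma$-preimages. To avoid errors I would just fix a starting element (say $i_2$ under one convention), compute $\tau(i_2)$, $\tau(\tau(i_2))$, and so on explicitly using disjointness of the supports at each step, and observe the orbit visits every element of $\supp(\sigma_1)$ and then every element of $\supp(\sigma_2)$ before returning. Since the whole union has $p+r$ elements and the single orbit has that many elements, $\tau$ must be a single cycle on exactly that set, which is the claim. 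This also makes transparent the intended use later: passing from $\varsigma(\pi^0)$ to $\varsigma(\pi^0(k,\ell))$, composing with a transposition whose two points lie in different cycles decreases the number of disjoint cycles by exactly one.
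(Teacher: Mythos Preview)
Your proposal is correct and follows essentially the same approach as the paper: write $\sigma_1=(a_1,\ldots,a_h)$ with $a_1=i_1$ and $\sigma_2=(b_1,\ldots,b_k)$ with $b_1=i_2$, then compute the product explicitly. The paper uses right-to-left function composition and obtains $\sigma_1\sigma_2(i_1,i_2)=(a_1,b_2,\ldots,b_k,b_1,a_2,\ldots,a_h)$, whereas your tentatively written cycle $(a_1,a_2,\ldots,a_p,b_1,\ldots,b_r)$ is what one gets under left-to-right composition; since you explicitly flagged the convention issue and described the correct orbit-tracing verification, this is not a gap.
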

\begin{proof}
Let $\sigma_1 = (a_1,\ldots,a_h)$ and $\sigma_2 = (b_1,\ldots,b_k)$,
where $h,k\geq 2$. We may assume that $i_1 = a_1$ and $i_2 = b_1$. In
this case we have $$\sigma_1\sigma_2(i_1,i_2) =
(a_1,\ldots,a_h)(b_1,\ldots,b_k)(a_1,b_1) =
(a_1,b_2,\ldots,b_k,b_1,a_2,\ldots,a_h).$$
\end{proof}
\begin{claim}
\label{clm:c2}
Let $\sigma$ be a cycle and $i_1,i_2\in\supp(\sigma)$ be distinct.
Then one of the following holds for $\sigma(i_1,i_2)$:
\begin{enumerate}
  \item $\supp(\sigma(i_1,i_2)) = \supp(\sigma)$ and $\sigma(i_1,i_2)
    = \sigma_1\sigma_2$---a product of disjoint cycles
    with $\supp(\sigma_1)\cup\supp(\sigma_2)= \supp(\sigma)$.
  \item $\supp(\sigma(i_1,i_2)) = \supp(\sigma)\setminus\{i^*\}$,
    where $i^*\in\{i_1,i_2\}$
    and $\sigma(i_1,i_2)$ is a cycle on $\supp(\sigma)\setminus\{i^*\}$.
  \item $\supp(\sigma(i_1,i_2)) = \emptyset$ and $\sigma = (i_1,i_2)$.
\end{enumerate}
\end{claim}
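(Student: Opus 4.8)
The plan is to analyze the cycle $\sigma(i_1,i_2)$ directly by tracking where the transposition $(i_1,i_2)$ breaks or preserves the cyclic structure of $\sigma$. Write $\sigma = (a_1, a_2, \ldots, a_c)$ with $c = |\sigma| \geq 2$, and without loss of generality (by re-indexing the cyclic representation) take $i_1 = a_1$. Then $i_2 = a_r$ for some $r$ with $2 \leq r \leq c$. I would then compute $\sigma(a_1, a_r)$ as a function on $\supp(\sigma)$ by chasing the image of each element, treating the composition right-to-left: first apply $(a_1, a_r)$, then $\sigma$.

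First I would handle the degenerate cases to peel them off. If $c = 2$, then $\sigma = (a_1, a_2) = (i_1, i_2)$ and $\sigma(i_1, i_2)$ is the identity, giving case 3. So assume $c \geq 3$ and $i_1, i_2$ are two distinct elements of the $c$-cycle. Now there are exactly two subcases depending on whether $i_2$ is the immediate cyclic successor of $i_1$ in $\sigma$, i.e.\ whether $\sigma(i_1) = i_2$ (equivalently $r = 2$), or not. In the first subcase, a short computation shows $\sigma(i_1,i_2) = (a_1, a_3, a_4, \ldots, a_c)$, a single cycle on $\supp(\sigma) \setminus \{a_2\} = \supp(\sigma) \setminus \{i^*\}$ with $i^* = i_2$; by the symmetry between composing on the left versus right (or equivalently, since $\sigma(i_1,i_2)$ and $(i_1,i_2)\sigma$ are conjugate and the roles of $i_1, i_2$ can be swapped), the case $\sigma(i_2) = i_1$ gives the analogous statement with $i^* = i_1$. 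Either way this is case 2. In the remaining subcase, $3 \leq r \leq c$ and neither of $i_1, i_2$ is the cyclic successor of the other; here the computation splits the cycle into two arcs: $\sigma(a_1, a_r) = (a_1, a_{r+1}, \ldots, a_c)(a_2, \ldots, a_r)$, a product of two disjoint cycles, each of length at least $2$, whose supports partition $\supp(\sigma)$. This is case 1, and it also verifies the claim's assertion $\supp(\sigma(i_1,i_2)) = \supp(\sigma)$ in that case.

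The bookkeeping I expect to need is just verifying that in the last subcase both of the resulting cycles genuinely have length $\geq 2$ (so that neither collapses to a fixed point), which follows because the two arcs $a_1, a_{r+1}, \ldots, a_c$ and $a_2, \ldots, a_r$ each contain at least two of the original $c \geq 3$ elements precisely when $3 \leq r \leq c-1$, and the boundary value $r = c$ must be checked separately — there the second arc is $a_2, \ldots, a_c$ (length $c-1 \geq 2$) and the first arc is just $(a_1)$, a fixed point, so in fact $r = c$ falls into case 2 with $i^* = i_1$, not case 1. So the honest split is: $r = 2$ gives case 2 ($i^* = i_2$); $r = c$ gives case 2 ($i^* = i_1$); and $3 \leq r \leq c-1$ gives case 1.

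The main obstacle is purely one of careful case management: getting the boundary indices $r = 2$ and $r = c$ sorted into case 2 rather than mistakenly into case 1, and correctly identifying which of $i_1, i_2$ plays the role of the dropped element $i^*$ in each boundary situation. The underlying permutation algebra is elementary — it is the standard fact that multiplying a cycle by a transposition of two of its points either splits it into two cycles or merges (here, shortens) it, the ``cut here'' interpretation of $(i_1,i_2)$ acting on $\sigma$ — but because the claim is stated with an explicit trichotomy keyed to supports, I would present the computation of $\sigma(a_1,a_r)$ in the three index ranges above and read off each of cases 1, 2, 3 from it. This dovetails with Claim~\ref{clm:cc2}, which is exactly the inverse merging operation and can be invoked to double-check the case-1 split by composing back.
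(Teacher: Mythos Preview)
Your proposal is correct and follows essentially the same approach as the paper's proof: write $\sigma=(a_1,\ldots,a_c)$, normalize so that $(i_1,i_2)=(a_1,a_r)$, and split into the cases $c=2$ (part~3), $r=2$ or $r=c$ (part~2, with $i^*=i_2$ or $i^*=i_1$ respectively), and $3\leq r\leq c-1$ (part~1). Your self-correction identifying $r=c$ as belonging to case~2 rather than case~1 lands you exactly on the paper's case analysis, and your explicit cycle computations $(a_1,a_3,\ldots,a_c)$, $(a_2,\ldots,a_c)$, and $(a_1,a_{r+1},\ldots,a_c)(a_2,\ldots,a_r)$ match the paper's verbatim.
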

\begin{proof}
Let $\sigma = (a_1,\ldots, a_h)$, where $h\geq 2$. We may assume
$(i_1,i_2) = (a_1,a_i)$ for some $i\in \{2,\ldots,h\}$. We now
consider the following cases for $h$ and $i$:

If $h=2$, then $i=2$ and $\sigma = (a_1,a_2) = (i_1,i_2)$, and we have
part~3.

If $h\geq 3$ and $i=2$, then $\sigma(i_1,i_2) =
(a_1,\ldots,a_h)(a_1,a_2) = (a_1,a_3,\ldots,a_h)$, and we have part~2.

If $h\geq 3$ and $i=h$, then $\sigma(i_1,i_2) =
(a_1,\ldots,a_h)(a_1,a_h) = (a_2,a_3,\ldots,a_h)$, and again we have
part~2.

Finally, if $h\geq 3$ and $i\not\in\{2,h\}$, then $i\in
\{3,\ldots,h-1\}$ (and hence $h\geq 4$), and $\sigma(i_1,i_2) =
(a_1,\ldots,a_h)(a_1,a_i) = (a_1,a_{i+1},\ldots,a_h)(a_2,\ldots,a_i)$,
and we have part~1.
\end{proof}

We are now ready to consider the cases of whether $\ell\in\{i,j\}$ or
not.

{\sc First case:} $\ell\not\in\{i,j\}$. Directly by definition we have
here that $q(L_V) = q(L_V^0) - 1$ and $q(L'_V) = q({L'}_V^0) - 1$, and
hence $q(L_V) - q(L'_V) = q(L_V^0) - q({L'}_V^0)$.
\begin{proposition}
\label{prp:1st-case}
If $\ell\not\in\{i,j\}$, then $q(L_V) - q(L'_V) = q(L_V^0) -
q({L'}_V^0) = \pm 1$.
\end{proposition}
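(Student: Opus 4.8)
The goal is to show that when $\ell \notin \{i,j\}$, the quantity $q(L_V^0) - q({L'}_V^0) = \pm 1$, which by the preliminary reduction gives $q(L_V) - q(L'_V) = \pm 1$. Since $L_V^0$ and ${L'}_V^0$ are both fixed-point labelings (they fix $0$), Definition~\ref{def:q}(1) applies to both, so we must show
\[
\bigl(|\pi^0| + \varsigma(\pi^0)\bigr) - \bigl(|{\pi'}^0| + \varsigma({\pi'}^0)\bigr) = \pm 1.
\]
By equation~(\ref{eqn:pio'-pio}) we have ${\pi'}^0 = \pi^0(k,\ell)$, so this reduces to a purely combinatorial statement about how right-multiplying a permutation by a transposition $(k,\ell)$ changes the quantity $|\pi| + \varsigma(\pi)$. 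The plan is therefore: (i) reduce to the local picture by observing that $(k,\ell)$ only interacts with the cycle(s) of $\pi^0$ containing $k$ and $\ell$, with all other cycles unchanged in both their support contribution to $|\cdot|$ and their count in $\varsigma(\cdot)$; (ii) split into the two structural possibilities — $k,\ell$ lie in the same cycle of $\pi^0$, or in two different cycles — and apply Claim~\ref{clm:c2} and Claim~\ref{clm:cc2} respectively.

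In the \emph{same-cycle} case, say $k,\ell \in \supp(\sigma)$ for one cycle $\sigma$ of $\pi^0$, Claim~\ref{clm:c2} tells us that $\sigma(k,\ell)$ is either (1) a product of two disjoint cycles whose supports together equal $\supp(\sigma)$ — so $|\cdot|$ is unchanged but $\varsigma$ goes up by $1$, giving a net change of $-1$ to $|\pi^0|+\varsigma(\pi^0)$ when passing to ${\pi'}^0$; (2) a single cycle on $\supp(\sigma) \setminus \{i^*\}$ — so $|\cdot|$ drops by $1$ and $\varsigma$ is unchanged (the shortened cycle still has support of size $\ge 1$, hence $\ge 2$ since a $1$-cycle is trivial — I should check that $|\sigma| \ge 3$ forces the result to be a genuine cycle, which the proof of Claim~\ref{clm:c2} confirms), giving net change $-1$; or (3) $\sigma = (k,\ell)$ itself, so $\sigma(k,\ell) = \mathrm{id}$, meaning $|\cdot|$ drops by $2$ and $\varsigma$ drops by $1$, net change $-3$. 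The last sub-case looks like it breaks the claim, so I expect the main obstacle to be \emph{ruling out case (3)}.

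The resolution of that obstacle should come from the constraint $\ell \notin \{i,j\}$ together with the identifications $\pi^0(k) = j$ and $\pi^0(\ell) = i$ established in the paragraph preceding the Proposition. If $\sigma = (k,\ell)$ were the whole cycle through $k$ and $\ell$, then $\pi^0$ would send $k \mapsto \ell$ and $\ell \mapsto k$; but $\pi^0(k) = j$ and $\pi^0(\ell) = i$, so this would force $\ell = j$ and $k = i$, contradicting $\ell \notin \{i,j\}$. (Similarly one checks $k \ne \ell$ so that $(k,\ell)$ is a genuine transposition, which was already part of the hypothesis $k \ne \ell$.) So case (3) cannot occur, and in the same-cycle case the change is always $-1$. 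By the symmetry ${\pi'}^0(k,\ell) = \pi^0$ noted after~(\ref{eqn:pio'-pio}), the roles of $\pi^0$ and ${\pi'}^0$ can be interchanged, so in the \emph{different-cycle} case — where by Claim~\ref{clm:cc2}, multiplying by $(k,\ell)$ merges the two cycles through $k$ and $\ell$ into one, keeping $|\cdot|$ fixed and dropping $\varsigma$ by $1$ — going from ${\pi'}^0$ back to $\pi^0$ has net change $-1$, i.e.\ going from $\pi^0$ to ${\pi'}^0$ has net change $+1$. (One must also confirm that in the different-cycle case $k,\ell$ really lie in two \emph{distinct nontrivial} cycles of $\pi^0$; since $\pi^0(k)=j\neq k$ would need justification, or alternatively observe that if either were a fixed point the transposition analysis still goes through with a one-element "cycle" — but cleanly, $k \neq \ell$ and the merge result of Claim~\ref{clm:cc2} requires $h,k\geq 2$, so I should note that $k$ and $\ell$ are non-fixed points of $\pi^0$, which follows because $q$ counts only moved elements and a fixed point would not be swappable in a nontrivial flip; this is a minor point to verify.) Combining both cases, $q(L_V^0) - q({L'}_V^0) = \pm 1$, and hence $q(L_V) - q(L'_V) = \pm 1$ as claimed.
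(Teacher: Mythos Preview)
Your overall strategy matches the paper's: reduce to showing that $|\pi^0| + \varsigma(\pi^0)$ changes by exactly $1$ under right-multiplication by $(k,\ell)$, using Claims~\ref{clm:cc2} and~\ref{clm:c2}. However, your case split into ``same cycle'' versus ``different cycles'' of $\pi^0$ tacitly assumes that both $k$ and $\ell$ lie in $\supp(\pi^0)$. You correctly observe $\ell\in\supp(\pi^0)$ from $\pi^0(\ell)=i\neq\ell$, but since $\pi^0(k)=j$, one has $k\in\supp(\pi^0)$ only when $k\neq j$. The case $k=j$ is exactly the one you flag as needing justification, and your proposed fix---that ``a fixed point would not be swappable in a nontrivial flip''---does not hold: $k$ is a vertex index, and nothing in the setup prevents $\pi^0$ from fixing it. When $k=j$, the element $k$ lies in no cycle of $\pi^0$, so neither Claim~\ref{clm:cc2} nor Claim~\ref{clm:c2} applies.

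The paper's decomposition is instead by whether $k\in\{i,j\}$, and it dispatches $k=j$ via the $\pi^0\leftrightarrow{\pi'}^0$ symmetry you mention (so that one may assume $k=i$); then $\pi^0(\ell)=i=k$ forces $k,\ell$ into the same cycle, consecutively, with $|\sigma|\geq 3$, and Claim~\ref{clm:c2} part~2 gives $|{\pi'}^0|=|\pi^0|-1$ with $\varsigma$ unchanged. Equivalently you could invoke Claim~\ref{clm:c1} directly for $k=j$. A minor additional slip: in your part~1 analysis, if $\varsigma$ goes up by $1$ with $|\cdot|$ unchanged, the sum $|\cdot|+\varsigma$ goes \emph{up} by $1$, so the net change there is $+1$, not $-1$; hence your claim that ``the change is always $-1$'' in the same-cycle case is also off, though this does not affect the final $\pm 1$ conclusion.
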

\begin{proof}
Assuming $\ell\not\in\{i,j\}$, we have $\pi^0(\ell) = i$ and
${\pi'}^0(\ell) = j$, and hence $\ell$ is in the support of both
$\pi^0$ and ${\pi'}^0$.

If $k\not\in\{i,j\}$, then $\{k,\ell\}$ is contained in both
$\supp(\pi^0)$ and $\supp({\pi'}^0)$, and hence by definition we have
that $|\pi^0(k,\ell)| = |\pi^0|$.  Since $\pi^0$ is a product of
disjoint cycles, then, either (i) there are two cycles $\sigma_1$ and
$\sigma_2$ of $\pi^0$ such that $k\in \supp(\sigma_1)$ and $\ell\in
\supp(\sigma_2)$, or (ii) there is one cycle $\sigma$ of $\pi^0$ such
that $\{k,\ell\}\subseteq \supp(\sigma)$.  Since the cycles of $\pi$
commute, we have by Claim~\ref{clm:cc2} in case (i) that
$\varsigma(\pi^0(k,\ell)) = \varsigma(\pi^0) - 1$, and by
Claim~\ref{clm:c2} in case (ii) part 1 that $\varsigma(\pi^0(k,\ell))
= \varsigma(\pi^0) + 1$.  By (\ref{eqn:L'}) this completes the
argument when $k\not\in\{i,j\}$.

If $k\in\{i,j\}$, we may by symmetry ($\pi^0 \leftrightarrow
{\pi'}^0$) assume that $k=i\neq j$.  In this case we have $\pi^0(k) =
j$ so $k\in\supp(\pi^0)$, and ${\pi'}^0(k) = i$ so
$k\not\in\supp({\pi'}^0)$.  Hence, we have $|\pi^0(k,\ell)| = |\pi^0|
- 1$.  Since $\pi^0(\ell) = i = k$, we see that both $k$ and $\ell$
are contained in the same cycle $\sigma$ of $\pi^0$ in its disjoint
cycle decomposition, and they are consecutive.  Moreover, since
$\pi^0(k) = j\neq i$, we see that $|\sigma|\geq 3$.  Again, since
disjoint cycles commute, we have by Claim~\ref{clm:c2} part~2 that
$\varsigma(\pi^0(k,\ell)) = \varsigma(\pi^0)$. By (\ref{eqn:L'}) 
this fact completes the argument when $k=i$, and hence the proof of the proposition.
\end{proof}

{\sc Second case:} $\ell\in\{i,j\}$. By symmetry we may assume $\ell =
i$.  In this case we have directly by definition that $q(L_V) =
q(L_V^0) + 1$ and $q(L'_V) = q({L'}_V^0) - 1$.  Before continuing we
need one more basic observation about permutations.
\begin{claim}
\label{clm:c1}
Let $\sigma$ be a cycle. If $i_1\in\supp(\sigma)$ and
$i_2\not\in\supp(\sigma)$, then $\sigma(i_1,i_2)$ is a cycle on
$\supp(\sigma)\cup\{i_2\}$.
\end{claim}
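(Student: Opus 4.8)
The plan is to argue directly from the cycle structure, exactly in the spirit of the proofs of Claims~\ref{clm:cc2} and~\ref{clm:c2}. Write $\sigma = (a_1,\ldots,a_h)$ with $h \geq 1$ (allowing $h=1$ as the degenerate ``cycle'' fixing a single point, or starting from $h\geq 2$ if the paper intends $|\sigma|\geq 2$; I would state the normalization that matches the earlier claims). By relabeling the cycle notation we may assume $i_1 = a_1$, and we set $i_2 = b$, a symbol outside $\supp(\sigma) = \{a_1,\ldots,a_h\}$. The whole content of the claim is then a one-line computation of the composition of permutations.

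The key step is to evaluate $\sigma(i_1,i_2) = (a_1,\ldots,a_h)(a_1,b)$ as a product of maps. Reading the composition right-to-left: the transposition $(a_1,b)$ sends $a_1 \mapsto b$ and $b \mapsto a_1$ and fixes everything else; then $\sigma$ acts. So $b \xmapsto{(a_1,b)} a_1 \xmapsto{\sigma} a_2$; for $2 \le \ell \le h-1$, $a_\ell \mapsto a_\ell \mapsto a_{\ell+1}$; and $a_h \mapsto a_h \mapsto a_1 \mapsto a_1$ under the second factor — wait, more carefully: $a_h \xmapsto{(a_1,b)} a_h \xmapsto{\sigma} a_1$, but we must still track where $a_1$ goes: $a_1 \xmapsto{(a_1,b)} b \xmapsto{\sigma} b$, since $b \notin \supp(\sigma)$. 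Hence the product is the single cycle
\[
\sigma(i_1,i_2) = (a_1, b, a_2, a_3, \ldots, a_h),
\]
which has support $\{a_1,\ldots,a_h\}\cup\{b\} = \supp(\sigma)\cup\{i_2\}$, as claimed. (If one wants $h\geq 2$, note that when $h=2$ this reads $(a_1,b,a_2)$, still a genuine $3$-cycle, consistent with the statement.)

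I do not expect any real obstacle here: this is a routine cycle-arithmetic verification with no case analysis, unlike Claim~\ref{clm:c2}. The only mild subtlety is bookkeeping the direction of composition (the paper writes $f_{i_1}f_{i_2}\cdots$ acting on the right of $\sigma$, so composition is function composition with the rightmost factor applied first), and making sure the degenerate small-$h$ case is covered by whatever convention on $|\sigma|$ is in force. Once the explicit cycle $(a_1,b,a_2,\ldots,a_h)$ is written down, reading off its support finishes the claim immediately.
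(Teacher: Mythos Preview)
Your proof is correct and follows essentially the same approach as the paper: write $\sigma=(a_1,\ldots,a_h)$, assume without loss of generality that $(i_1,i_2)=(a_1,b)$ with $b\notin\{a_1,\ldots,a_h\}$, and compute directly that $(a_1,\ldots,a_h)(a_1,b)=(a_1,b,a_2,\ldots,a_h)$. The paper's proof is just the terse one-line version of your element-by-element verification.
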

\begin{proof}
Let $\sigma = (a_1,\ldots, a_h)$. We may assume $(i_1,i_2) = (a_1,b)$,
where $b\not\in\{a_1,\ldots,a_h\}$, and so we get $\sigma(i_1,i_2) =
(a_1,\ldots,a_h)(a_1,b) = (a_1,b,a_2,\ldots,a_h)$.
\end{proof}
\begin{proposition}
\label{prp:2nd-case}
If $\ell = i$, then $q(L_V) - q(L'_V) = q(L_V^0) - q({L'}_V^0) + 2 =
\pm 1$.
\end{proposition}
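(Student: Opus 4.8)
The plan is to run the same two-step template as in the proof of Proposition~\ref{prp:1st-case}: first peel off the ``$+2$'' correction term using Definition~\ref{def:q}, and then reduce everything to tracking how $|\cdot|$ and $\varsigma(\cdot)$ change when $\pi^0$ is post-composed with the transposition $(k,\ell)$, reading the answer off from the relation ${\pi'}^0 = \pi^0(k,\ell)$ in~(\ref{eqn:pio'-pio}) and equation~(\ref{eqn:L'}). For the first step: since $\ell = i$ and $\pi(0) = i$, the defining element ``$j$'' for $L_V$ in Definition~\ref{def:q} (the preimage of $0$ under $\pi$) is $\ell = i$, so we are in the sub-case $i = j$ and $q(L_V) = q(L_V^0)+1$; for $L'_V$ the defining ``$i$'' is $\pi'(0) = j$ while the defining ``$j$'' is again $\ell = i \neq j$, so $q(L'_V) = q({L'}_V^0) - 1$. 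Subtracting gives $q(L_V) - q(L'_V) = q(L_V^0) - q({L'}_V^0) + 2$, exactly as stated. It therefore remains to show $q(L_V^0) - q({L'}_V^0) \in \{-1,-3\}$, i.e.\ $q({L'}_V^0) - q(L_V^0) \in \{1,3\}$.

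The key observation---and the only place the hypothesis $\ell = i$ is genuinely used---is that $\ell$ is a fixed point of $\pi^0$: indeed $\pi^0(\ell) = \pi(0) = i = \ell$, so $\ell \notin \supp(\pi^0)$. Now combine $q(L_V^0) = |\pi^0| + \varsigma(\pi^0)$ with $q({L'}_V^0) = |\pi^0(k,\ell)| + \varsigma(\pi^0(k,\ell))$ from~(\ref{eqn:L'}), and split on whether $k \in \supp(\pi^0)$. Since $\pi^0(k) = j$, this is the same as splitting on whether $k = j$ or $k \neq j$.

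If $k = j$, then also $k \notin \supp(\pi^0)$, so the transposition $(k,\ell)$ has support $\{k,\ell\}$ disjoint from $\supp(\pi^0)$; hence $\pi^0(k,\ell) = \pi^0\cdot(k,\ell)$ simply adds a new $2$-cycle, giving $|\pi^0(k,\ell)| = |\pi^0| + 2$ and $\varsigma(\pi^0(k,\ell)) = \varsigma(\pi^0) + 1$, so $q({L'}_V^0) - q(L_V^0) = 3$ and $q(L_V) - q(L'_V) = -3 + 2 = -1$. If $k \neq j$, then $k \in \supp(\pi^0)$; let $\sigma$ be the cycle of $\pi^0$ through $k$, which has length at least $2$ since $\pi^0(k) = j \neq k$. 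Because disjoint cycles commute and every cycle of $\pi^0$ fixes $\ell$, I can isolate $\sigma$ and apply Claim~\ref{clm:c1} with $i_1 = k$, $i_2 = \ell$: $\sigma(k,\ell)$ is a single cycle on $\supp(\sigma)\cup\{\ell\}$ and all other cycles are untouched, so $|\pi^0(k,\ell)| = |\pi^0| + 1$ and $\varsigma(\pi^0(k,\ell)) = \varsigma(\pi^0)$, whence $q({L'}_V^0) - q(L_V^0) = 1$ and $q(L_V) - q(L'_V) = -1 + 2 = +1$. In either sub-case the value is $\pm 1$, which finishes the proof.

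I do not expect a serious obstacle here: the whole case collapses once one notices that $\ell = i$ forces $\ell$ to be a fixed point of $\pi^0$, after which the analysis is just the disjoint-transposition case plus one invocation of Claim~\ref{clm:c1}. The only things requiring care are the bookkeeping that produces the ``$+2$'' (it is $+1$ for $L_V$ because there $i = j$, and $-1$ for $L'_V$ because there $i \neq j$) and keeping straight which of the labels $i, j, k, \ell$ lie in $\supp(\pi^0)$.
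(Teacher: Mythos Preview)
Your proof is correct and follows essentially the same approach as the paper's own proof. The paper likewise derives $q(L_V) = q(L_V^0)+1$ and $q(L'_V) = q({L'}_V^0)-1$ from Definition~\ref{def:q}, observes that $\ell\notin\supp(\pi^0)$, and then splits on whether $k\in\{i,j\}$---which, since $k\neq\ell=i$, is exactly your split on $k=j$ versus $k\neq j$---handling the two sub-cases with the disjoint-transposition observation and Claim~\ref{clm:c1} respectively.
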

\begin{proof}
Assuming $\ell = i$, we have $\pi^0(\ell) = i$ and ${\pi'}^0(\ell) =
j$, and hence $\ell\in \supp({\pi'}^0)\setminus \supp(\pi^0)$.

If $k\in\{i,j\}$, then since $k\neq \ell$, we have $k=j$.  Also, since
$\pi^0(k) = j$ and ${\pi'}^0(k) = i$, we have $k\in
\supp({\pi'}^0)\setminus \supp(\pi^0)$. Since $\pi^0$ and ${\pi'}^0$
only differ on $k$ and $\ell$, we have $\supp({\pi'}^0) =
\supp(\pi^0)\cup \{k,\ell\}$, this union being disjoint. From this
fact it is immediate that $|{\pi'}^0| = |\pi^0(k,\ell)| = |\pi^0| + 2$
and $\varsigma(\pi^0(k,\ell)) = \varsigma(\pi^0) + 1$, and hence by
(\ref{eqn:L'}), we have the following:
\[
q({L'}_V^0) = |\pi^0(k,\ell)| + \varsigma(\pi^0(k,\ell)) = |\pi^0| + \varsigma(\pi^0) + 3 = q(L_V^0) + 3,
\]
and hence $q(L_V^0) - q({L'}_V^0) + 2 = q(L_V^0) - (q(L_V^0) + 3) + 2
= -1$, which completes the argument when $k\in\{i,j\}$.

If $k\not\in\{i,j\}$, then since $\pi^0(k) = j$ and ${\pi'}^0(k) = i$,
we have that $k$ is contained in both $\supp(\pi^0)$ and
$\supp({\pi'}^0)$, and therefore $|\pi^0(k,\ell)| = |\pi^0| + 1$.
Since $\pi^0$ is a product of disjoint cycles, there is a unique cycle
$\sigma$ of $\pi^0$ whose support contains $k$.  By
Claim~\ref{clm:c1}, $\sigma(k,\ell)$ is a cycle on
$\supp(\sigma)\cup\{\ell\}$, and hence $\varsigma(\pi^0(k,\ell)) =
\varsigma(\pi^0)$.  By (\ref{eqn:L'}) we therefore have
\[
q({L'}_V^0) = |\pi^0(k,\ell)| + \varsigma(\pi^0(k,\ell)) = |\pi^0| + \varsigma(\pi^0) + 1 = q(L_V^0) + 1,
\]
and hence $q(L_V^0) - q({L'}_V^0) + 2 = q(L_V^0) - (q(L_V^0) + 1) + 2
= 1$, which completes the argument when $k\not\in\{i,j\}$.  This
result completes the proof.
\end{proof}
\begin{corollary}
\label{cor:star-lower}
Let $G = K_{1,n-1}$ be the star on $n$ vertices. If $L_V$ is a vertex
labeling of $G$ and $L'_V$ is a vertex labeling obtained from $L_V$ by a
single flip, then $|q(L_V) - q(L'_V)| = 1$.
\end{corollary}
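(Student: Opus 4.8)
The plan is to observe that \emph{all} the analytic work has already been done in the preceding development, so the proof of Corollary~\ref{cor:star-lower} is purely a matter of collecting the cases. First I would split on whether one of the two labels swapped by the single flip is the label $0$. If it is, then by the definition of the flip functions $f_i$ for the star (each $f_i$ swaps the labels on $v_0$ and $v_i$), the flip is exactly an $f_i$, and one checks that either $L'_V = L_V^0$ or $L_V = {L'_V}^0$; in either case Definition~\ref{def:q} gives $q(L_V)$ and $q(L'_V)$ differing by exactly $1$ (this is precisely the remark made just before the ``First case'' paragraph). So the neutral-label case is immediate.

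Next I would handle the case in which neither label swapped by the flip equals $0$. Then, writing $i = L_V(v_0)$ and $j = L'_V(v_0)$ (both nonzero and distinct) and letting $\ell \in \{1,\ldots,n-1\}$ be the index with $L_V(v_\ell) = L'_V(v_\ell) = 0$, we are exactly in the situation set up before the two propositions. I would then subdivide according to whether $\ell \notin \{i,j\}$ or $\ell \in \{i,j\}$. In the first subcase, Proposition~\ref{prp:1st-case} gives $q(L_V) - q(L'_V) = q(L_V^0) - q({L'_V}^0) = \pm 1$. In the second subcase (by the symmetry $\pi^0 \leftrightarrow {\pi'}^0$ we may take $\ell = i$), Proposition~\ref{prp:2nd-case} gives $q(L_V) - q(L'_V) = q(L_V^0) - q({L'_V}^0) + 2 = \pm 1$. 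In every case $|q(L_V) - q(L'_V)| = 1$, which is the claim.

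There is no real obstacle here: the corollary is a one-line synthesis once the remark preceding the ``First case'' and Propositions~\ref{prp:1st-case} and~\ref{prp:2nd-case} are in hand. If anything merits a sentence of care, it is making explicit that these three facts genuinely exhaust all possibilities for a single flip on the star (flip involving $0$ versus not; and within the latter, $\ell \in \{i,j\}$ versus $\ell \notin \{i,j\}$), so that no case is missed. One might also remark, for emphasis, that this corollary is the star analogue of the folklore fact for paths that a single mutation changes the inversion count $p(\cdot)$ by exactly one, and that it is exactly what will later let us push the upper bound of Corollary~\ref{cor:perm-upper} down to a matching tight lower bound via the function $q$.
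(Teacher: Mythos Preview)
Your proposal is correct and matches the paper's approach exactly: the paper states Corollary~\ref{cor:star-lower} without a separate proof precisely because the entire development between Definition~\ref{def:q} and Proposition~\ref{prp:2nd-case} already constitutes its proof, split into the same three cases you identify (a label equal to $0$ is swapped; otherwise $\ell\notin\{i,j\}$; otherwise $\ell\in\{i,j\}$). Your synthesis is just an explicit write-up of what the paper leaves implicit.
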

Corollary~\ref{cor:star-lower} shows that the upper bound given in
Corollary~\ref{cor:perm-upper} is also a lower bound.  We summarize
these results in the following.
\begin{proposition}
\label{prp:star-0}
Let $G = K_{1,n-1}$ be the star on $n$ vertices.  Let $L_V$ and $L'_V$
be vertex labelings such that $L'_V(v_i) = i$ and $L_V(v_i) = \pi(i)$
for $i\in\{0,1,\ldots,n-1\}$, where $\pi(0) = 0$. In this case the
labeling $L_V$ can be transformed into $L'_V$ in $t$ flips if and only
if $t\geq |\pi|+\varsigma(\pi)$.
\end{proposition}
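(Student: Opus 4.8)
The plan is to read Proposition~\ref{prp:star-0} directly off the two halves of the machinery already assembled, in complete parallel with the way Observation~\ref{obs:tightpath} is obtained from the inversion count $p(\cdot)$ for the path. Corollary~\ref{cor:perm-upper} will supply the ``if'' direction, and Corollary~\ref{cor:star-lower} together with a telescoping estimate like~(\ref{eqn:path-upper}) will supply the ``only if'' direction. Throughout, the hypothesis $\pi(0)=0$ is exactly what is needed so that $q(L_V)$ is computed from the first (easy) clause of Definition~\ref{def:q}, and the fact that $q$ vanishes precisely on the identity labeling is what lets us evaluate $q(L'_V)$.

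For the ``if'' direction, assume $t \geq |\pi| + \varsigma(\pi)$. Since $\pi(0)=0$, Corollary~\ref{cor:perm-upper} already produces a sequence of $|\pi| + \varsigma(\pi)$ flips transforming $L_V$ into $L'_V$, and $|\pi| + \varsigma(\pi) \leq t$, so $L_V$ can be transformed into $L'_V$ in $t$ or fewer flips. For the ``only if'' direction, suppose $L_V$ is transformed into $L'_V$ by a flip sequence $(M_i)_{i=0}^{s}$ with $M_0 = L_V$, $M_s = L'_V$, and $s \leq t$. By Corollary~\ref{cor:star-lower}, $|q(M_i) - q(M_{i+1})| = 1$ for every $i$, so telescoping and applying the triangle inequality exactly as in~(\ref{eqn:path-upper}) gives
\[
|q(L_V) - q(L'_V)| = \left| \sum_{i=0}^{s-1}\bigl(q(M_i) - q(M_{i+1})\bigr) \right| \leq \sum_{i=0}^{s-1} |q(M_i) - q(M_{i+1})| = s .
\]
Because $\pi(0)=0$, Definition~\ref{def:q} gives $q(L_V) = |\pi| + \varsigma(\pi)$, and because $L'_V(v_i)=i$ for all $i$, the remark following Definition~\ref{def:q} gives $q(L'_V) = 0$. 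Hence $|\pi| + \varsigma(\pi) \leq s \leq t$, as required.

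There is no real obstacle remaining here: all of the genuine work lives in Corollaries~\ref{cor:perm-upper} and~\ref{cor:star-lower} (the latter resting on Propositions~\ref{prp:1st-case} and~\ref{prp:2nd-case}), and the only thing to be careful about is the bookkeeping noted above---invoking $\pi(0)=0$ to land in clause~1 of Definition~\ref{def:q}, and recalling that $q$ is zero exactly on the target labeling. As a byproduct, the same displayed telescoping combined with $|q(M_i)-q(M_{i+1})| = 1$ yields, just as in~(\ref{eqn:path-parity}), that any $s$ realizing the transformation satisfies $s \equiv |\pi|+\varsigma(\pi) \pmod{2}$, the star analogue of Muir's parity statement in Theorem~\ref{thm:path-complete}; in particular $|\pi|+\varsigma(\pi)$ itself is realizable and so is $|\pi|+\varsigma(\pi)+2k$ for every nonnegative integer $k$ (append $k$ copies of $f_1f_1$, which acts as the identity relabeling).
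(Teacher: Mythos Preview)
Your proof is correct and matches the paper's approach exactly: the paper states Proposition~\ref{prp:star-0} without a separate proof, introducing it as a summary of Corollary~\ref{cor:perm-upper} (upper bound) and Corollary~\ref{cor:star-lower} (lower bound), and you have spelled out precisely those two halves, including the telescoping argument parallel to~(\ref{eqn:path-upper}). Your closing parity remark is also on target and anticipates Theorem~\ref{thm:star-complete}.
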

In Proposition~\ref{prp:star-0} we restricted to labelings $L_V$ and
$L'_V$ with $L_V(v_0) = L'_V(v_0) = 0$, which by
Definition~\ref{def:q} is the fundamental case for defining the
parameter $q(L_V)$. Just as we summarized for the case of the path $G
= P_n$ in the beginning of this section, we can likewise define the
{\em relative star parameter\/} $q(L_V,L'_V)$ for any two vertex
labelings $L_V$ and $L'_V$ of the $n$-star $G = K_{1,n-1}$ to be
$q(L''_V)$, where $L''_V$ is the unique vertex labeling obtained from
$L_V$ by renaming the labels of $L'_V$ so that $L'_V(v_i) = i$ for all
$i$.  (Strictly speaking, if $L_V$ and $L'_V$ are given by
permutations $\pi$ and $\pi'$ of $\{0,1,\ldots,n-1\}$, then $L''_V$ is
given by the permutation $\pi'' = \pi({\pi'}^{-1})$.) Clearly, this
relative parameter $q$ is symmetric, $q(L_V,L'_V) = q(L'_V,L_V)$, as
was the case for the path.

As with the path $P_n$, where the parameter $p(\cdot)$ increased or
decreased by exactly one with each mutation or flip, by
Corollary~\ref{cor:star-lower}, so does $q(\cdot)$ for the star $G =
K_{1,n-1}$. Hence, exactly the same arguments used for
(\ref{eqn:path-upper}) and (\ref{eqn:path-parity}) can be
used to obtain the following theorem, our main result of this section.
\begin{theorem}
\label{thm:star-complete}
Let $G = K_{1,n-1}$ be the star on $n$ vertices, $L_V$ and $L'_V$
vertex labelings, and $t\in\nats$. Then we can transform the labeling
$L_V$ into $L'_V$ using $t$ flips if and only if $t = q(L_V,L'_V) +
2k$ for some nonnegative integer $k$, where $q$ is the relative
parameter corresponding to the one in Definition~\ref{def:q}.
\end{theorem}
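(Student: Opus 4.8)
The plan is to transcribe, essentially word for word, the two arguments already given for the path $P_n$ around~(\ref{eqn:path-upper}) and~(\ref{eqn:path-parity}), now with the star parameter $q$ of Definition~\ref{def:q} in the role of the inversion count $p$, and with Corollary~\ref{cor:star-lower} (``each flip changes $q$ by exactly one'') in the role of the corresponding elementary fact for $p$. First I would reduce to the case $L'_V(v_i)=i$ for all $i$ by renaming labels, so that $L_V$ is given by a single permutation $\pi''$ of $\{0,\dots,n-1\}$ and $q(L_V,L'_V)=q(\pi'')$; a flip sequence from $L_V$ to $L'_V$ corresponds bijectively to one from this renamed labeling to the identity labeling, since the $f_i$ act purely positionally. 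Note that the target is reached exactly when $q=0$.

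For the ``only if'' direction, suppose $L_V$ evolves into $L'_V$ in exactly $t$ flips, producing labelings $M_0=L_V,M_1,\dots,M_t=L'_V$ with $M_{s+1}$ obtained from $M_s$ by one flip. By Corollary~\ref{cor:star-lower}, $q(M_{s+1})-q(M_s)=\pm1$ for every $s$, so telescoping and the triangle inequality give $q(L_V,L'_V)=|q(M_0)-q(M_t)|\le t$, exactly as in~(\ref{eqn:path-upper}). Writing $Q_+$ and $Q_-$ for the numbers of steps on which $q$ increases and decreases, we have $Q_--Q_+=q(L_V,L'_V)$ and $t=Q_++Q_-=q(L_V,L'_V)+2Q_+$, so $t-q(L_V,L'_V)$ is even; this is the analogue of~(\ref{eqn:path-parity}). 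Hence $t=q(L_V,L'_V)+2k$ for a nonnegative integer $k$.

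For the ``if'' direction, fix $k\ge0$. I would first exhibit a flip sequence of length exactly $q(L_V,L'_V)$ ending at $L'_V$, and then append $k$ copies of a fixed flip performed twice; each such pair $f_if_i$ is the identity relabeling, so the result still ends at $L'_V$ and has length $q(L_V,L'_V)+2k$. To obtain a sequence of length exactly $q(L_V,L'_V)$: when $\pi''(0)=0$ this is the composition built in the proof of Corollary~\ref{cor:perm-upper}, which uses exactly $|\pi''|+\varsigma(\pi'')=q(\pi'')$ flips (optimal by the lower bound just proved; see also Proposition~\ref{prp:star-0}). When $\pi''(0)=i\ne0$, let $j$ be the vertex currently carrying label $0$: if $i=j$ one flip $f_j$ corrects the center and leaves the center-fixed labeling $L_V^0$, which takes $q(\pi'')-1$ further flips, for a total of $q(\pi'')$; if $i\ne j$, then $q(\pi'')=q((\pi'')^0)-1$, and I would write the disjoint-cycle decomposition of $(\pi'')^0$ with the cycle through $j$ placed last and started at $j$, so that prepending the flip $f_j$ cancels the leading $f_j$ of the corresponding optimal sequence (using $f_jf_j=\mathrm{id}$), leaving an honest flip sequence of length $q((\pi'')^0)-1=q(\pi'')$ that still ends at the target.

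The one genuinely new point---and the main obstacle---is this last bookkeeping in the case $i\ne j$: because Definition~\ref{def:q} is recursive with $q(\pi'')=q((\pi'')^0)-1$ there, naively correcting the center first overshoots the optimum by two, and one must arrange the cycle decomposition and the starting vertex of the $j$-cycle so that the corrective flip cancels against the first flip of the center-fixed construction. Everything else---the lower bound, the parity obstruction, and the padding by even numbers of flips---is a direct translation of the path argument with $q$ substituted for $p$, so once the exact value $q(L_V,L'_V)$ is shown achievable the theorem follows.
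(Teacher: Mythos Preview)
Your proposal is correct and follows exactly the approach the paper takes: the paper's entire proof is the one-sentence remark preceding the theorem, namely that since $q(\cdot)$ changes by $\pm 1$ under each flip (Corollary~\ref{cor:star-lower}), the arguments of~(\ref{eqn:path-upper}) and~(\ref{eqn:path-parity}) carry over verbatim. In fact you do more than the paper: your explicit treatment of achievability when $\pi''(0)\neq 0$---in particular the cancellation trick in the $i\neq j$ case, arranging the cycle through $j$ to be processed first so that the corrective $f_j$ cancels against the leading $f_j$ of $f_\sigma$---fills a detail the paper leaves implicit, since Corollary~\ref{cor:perm-upper} and Proposition~\ref{prp:star-0} only construct an optimal flip sequence when the center is already correctly labeled.
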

Theorem~\ref{thm:star-complete} generalizes the results both
from~\cite{Akers-Kris} and~\cite{Misic-IEEE}.

Consider the graph $C$ where its vertex set $V(C)$ consists of all the
$n!$ vertex labelings of the star $G = K_{1,n-1}$, so each vertex
$v_{\pi}$ of $C$ corresponds to a permutation $\pi\in S_n$, and where
two vertices $v_{\pi}$ and $v_{\pi'}$ are connected in $C$ if and only
if $\pi'= \pi(0,i)$ for some $i\in\{1,2,\ldots,n-1\}$. Here $C =
(V(C), E(C))$ is an example of a {\em Cayley graph}, and this
particular one is sometimes ambiguously also referred to as the {\em
star graph\/} in the
literature~\cite[p.~561]{Akers-Kris},~\cite{Misic-thesis},
and~\cite[p.~374]{Misic-IEEE}.  In terms of Cayley graphs, we can
interpret Theorem~\ref{thm:star-complete} as follows:
\begin{corollary}
\label{cor:Cayley}
Let $C$ be the Cayley graph of the $n$-star $G = K_{1,n-1}$.  For any
$\pi, \pi'\in S_n$, let $v_{\pi}, v_{\pi'}\in V(C)$ be the
corresponding vertices of $C$, and $L_V$ and $L'_V$ the corresponding
vertex labelings of $G$.  Then the following holds:
\begin{enumerate}
   \item The distance between $v_{\pi}$ and $v_{\pi'}$
         in $C$ is precisely $q(L_V,L'_V)$.
   \item There is a walk between $v_{\pi}$ and $v_{\pi'}$ in $C$
         of length $d$ if and only if $d = q(L_V,L'_V) + 2k$
         for some nonnegative integer $k$.
\end{enumerate}
\end{corollary}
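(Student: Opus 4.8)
The plan is to recognize that the Cayley graph $C$ is simply a reformulation of the flip dynamics on the star, so that Corollary~\ref{cor:Cayley} becomes a translation of Theorem~\ref{thm:star-complete} into the language of graph distance. The first step is to set up the \emph{walk--flip-sequence dictionary}. By the definition of $C$, the vertices $v_\pi$ and $v_{\pi'}$ are adjacent precisely when $\pi' = \pi(0,i)$ for some $i\in\{1,\dots,n-1\}$; and, as noted just before Lemma~\ref{lmm:cycle-upper}, applying the single flip $f_i=(0,i)$ to the labeling of $G$ corresponding to $\pi$ produces exactly the labeling corresponding to $\pi(0,i)$. Consequently an edge of $C$ corresponds to a single flip, and, iterating, a walk $v_\pi=v_{\pi^{(0)}},v_{\pi^{(1)}},\dots,v_{\pi^{(d)}}=v_{\pi'}$ of length $d$ in $C$ corresponds bijectively to a flip sequence $L_V=L^{(0)}_V,L^{(1)}_V,\dots,L^{(d)}_V=L'_V$ of length $d$ that transforms $L_V$ into $L'_V$, and vice versa. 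The one thing needing care here is the left/right bookkeeping: one must check that the adjacency convention $\pi'=\pi(0,i)$ matches the order in which flips are composed when applied successively to a labeling; this is the verification I would do most carefully.

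With the dictionary in hand, part~2 is immediate: a walk of length $d$ between $v_\pi$ and $v_{\pi'}$ exists if and only if $L_V$ can be transformed into $L'_V$ in exactly $d$ flips, which by Theorem~\ref{thm:star-complete} holds if and only if $d=q(L_V,L'_V)+2k$ for some nonnegative integer $k$.

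For part~1 I would show the distance equals the minimum length of a walk between the two vertices. First, $C$ is connected, since by Corollary~\ref{cor:perm-upper} (applied via the relative parameter) every labeling of $G$ can be transformed into every other, so every pair of vertices of $C$ is joined by at least one walk. Second, in any graph a shortest walk between two vertices is automatically a path---if it repeated a vertex, excising the closed sub-walk between two occurrences would yield a strictly shorter walk---so the distance equals the minimum walk length. By part~2 the possible walk lengths are exactly $q(L_V,L'_V)+2k$ with $k\ge 0$ an integer, and the smallest of these is $q(L_V,L'_V)$, attained at $k=0$ (legitimate since $q(L_V,L'_V)\ge 0$ by Definition~\ref{def:q}). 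Hence the distance between $v_\pi$ and $v_{\pi'}$ in $C$ is precisely $q(L_V,L'_V)$, which completes part~1.

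In short, the entire content of this corollary is already contained in Theorem~\ref{thm:star-complete}; what remains is the routine translation into Cayley-graph terminology, the only mild subtlety being to make the walk--flip-sequence correspondence (and its composition conventions) precise, after which the elementary fact that shortest walks are paths yields the distance statement.
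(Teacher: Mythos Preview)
Your proposal is correct and matches the paper's approach exactly: the paper presents this corollary without a separate proof, introducing it simply as an interpretation of Theorem~\ref{thm:star-complete} in Cayley-graph language (``In terms of Cayley graphs, we can interpret Theorem~\ref{thm:star-complete} as follows''). You have just spelled out in detail the routine translation the paper leaves implicit.
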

Other related results regarding the Cayley graph of the star can be
found in~\cite{WSLS} where the distance distribution among the
vertices of the star graph is computed, and in~\cite{QuiMA} where the
cycle structure of the Cayley graph of the star is investigated.

Let $n\in\nats$ be given. Among all permutations $\pi$ on
$\{0,1,\ldots,n-1\}$ with $\pi(0)=0$, clearly a maximum value of
$|\pi|$ is $n-1$, obtained when $\supp(\pi) = \{1,2,\ldots,n-1\}$. Also,
the maximum value of $\varsigma(\pi)$ is $\floor{(n-1)/2}$, obtained
when every cycle of $\pi$ has support of two when $n-1$ is even, or
when every cycle except one (with support of three) has support of two
when $n-1$ is odd.  Hence, among all permutations $\pi$ on
$\{0,1,\ldots,n-1\}$, the maximum value of $|\pi^0| + \varsigma(\pi^0)$
is always $n-1 + \floor{(n-1)/2} = \floor{3(n-1)/2}$.

Consider the star $G = K_{1,n-1}$ and a vertex labeling $L_V$ of $G$
with $q(L_V)$ at maximum. Let $\pi$ be the permutation on
$\{0,1,\ldots,n-1\}$ corresponding to $L_V$, so $L_V(v_i) =
\pi(i)$. If $\pi(0) = 0$, then $\pi = \pi^0$ and by
Definition~\ref{def:q}, the value $q(L_V)$ is at most
$\floor{3(n-1)/2}$.  Assume now that $\pi(0) = i\neq 0$, and hence
$\pi(j) = 0$ for some $j$.  If $i\neq j$, then by
Definition~\ref{def:q} and previous remarks $q(L_V) = q(L_V^0) - 1
\leq \floor{3(n-1)/2} - 1$. Finally if $i = j$, then $q(L_V) =
q(L_V^0)+1$. Since $\pi^0 = \pi(0,j)$, we obtain in this case that
\[
\pi^0(i) = [\pi(0,j)](i) = [\pi(0,i)](i) = i,
\]
and hence $i\not\in\supp(\pi^0)$. Therefore, $|\pi^0|\leq n-2$ and
$\varsigma(\pi^0)\leq \floor{3/2(n-2)}$, and so
\[
q(L_V) = q(L_V^0) + 1 = |\pi^0| + \varsigma(\pi^0) + 1
\leq n-2 + \floor{(n-2)/2} + 1 = \floor{(3n-4)/2}\leq \floor{3(n-1)/2}.
\]
From this inequality we see, in particular, that for a given $n\in\nats$,
the maximum value of $q(L_V)$ among all vertex labelings of the star
on $n$ vertices is $\floor{3(n-1)/2}$.  Hence, we obtain the
next observation as a special case.  This special case was also
observed both in~\cite[p.~561]{Akers-Kris} and
in~\cite[p.~378]{Misic-IEEE}.  In our setting we can state the
following.
\begin{observation}
\label{obs:star-tight}
Let $G = K_{1,n-1}$ be the star on $n$ vertices, $L_V$ and $L'_V$
vertex labelings, and $t = \floor{3(n-1)/2}$, then the answer to the
{\sc Vertex Relabeling Problem} is {\sc yes}.  That is, any labeled
star on $n$ vertices can evolve into any other labeled star in
$t = \floor{3(n-1)/2}$ mutations. Moreover, this value of $t$ is the
smallest possible with this property.
\end{observation}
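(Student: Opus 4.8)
The plan is to obtain both assertions essentially for free from Theorem~\ref{thm:star-complete}, combined with the computation carried out in the paragraph immediately preceding the observation, namely that $\max q(L_V) = \floor{3(n-1)/2}$ over all vertex labelings $L_V$ of $G = K_{1,n-1}$.

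For the first assertion, let $L_V$ and $L'_V$ be arbitrary vertex labelings and set $t = \floor{3(n-1)/2}$. By Theorem~\ref{thm:star-complete} (applied with $k=0$), the labeling $L_V$ can be transformed into $L'_V$ using exactly $q(L_V,L'_V)$ flips. Now $q(L_V,L'_V) = q(L''_V)$ for the labeling $L''_V$ obtained from $L_V$ by renaming according to $L'_V$, and by the extremal computation above every such value is at most $\floor{3(n-1)/2} = t$. Hence $L_V$ evolves into $L'_V$ in $q(L_V,L'_V) \le t$ mutations, so the answer to the {\sc Vertex Relabeling Problem} with bound $t$ is {\sc yes}.

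For the \emph{moreover} clause I would exhibit a single pair witnessing that the bound $t-1$ fails. Take $L'_V(v_i) = i$ for all $i$, and let $L_V$ be given by a permutation $\pi$ with $\pi(0)=0$, $\supp(\pi) = \{1,\ldots,n-1\}$, and $\varsigma(\pi) = \floor{(n-1)/2}$ --- concretely, decompose $\{1,\ldots,n-1\}$ into disjoint $2$-cycles, using a single $3$-cycle when $n-1$ is odd; this is exactly the extremal configuration identified before the observation, and for it $q(L_V,L'_V) = |\pi| + \varsigma(\pi) = (n-1) + \floor{(n-1)/2} = \floor{3(n-1)/2} = t$. By Theorem~\ref{thm:star-complete} every mutation sequence from $L_V$ to $L'_V$ has length $t + 2k$ with $k \ge 0$, so none has length below $t$; equivalently, invoking Corollary~\ref{cor:star-lower}, each flip changes $q(\cdot)$ by exactly $\pm 1$, so passing from $q(L_V) = t$ to $q(L'_V) = 0$ requires at least $t$ flips, by the same estimate as in~(\ref{eqn:path-upper}). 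Thus the answer with bound $t-1$ is {\sc no}, and since the set of bounds for which the answer is always {\sc yes} is upward closed, $t = \floor{3(n-1)/2}$ is the smallest such bound.

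Since both parts reduce to results already established, there is no genuine obstacle; the only point needing a word of care is confirming that a permutation simultaneously maximizing $|\pi|$ and $\varsigma(\pi)$ actually exists, so that the extremal value $\floor{3(n-1)/2}$ of $q$ is attained --- but this is precisely the case analysis on the parity of $n-1$ recorded just before the observation, which I would simply cite.
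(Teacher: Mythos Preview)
Your proposal is correct and follows essentially the same approach as the paper: the observation is derived directly from the extremal computation in the preceding paragraph (that $\max q(L_V) = \floor{3(n-1)/2}$, attained by the permutation built from disjoint $2$-cycles plus possibly one $3$-cycle) together with Theorem~\ref{thm:star-complete} and Corollary~\ref{cor:star-lower}. The paper in fact states it as an observation precisely because it falls out of what has already been established, which is exactly how you argue it.
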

A group-theoretical interpretation of this result is as follows.
\begin{corollary}
\label{cor:3/2}
Let $T\subseteq S_n$ be a set of $n-1$ transpositions, all of which
move a given element. Then every permutation $\pi\in S_n$ is a
composition of at most $t = \floor{3(n-1)/2}$ transpositions from $T$,
and this value is the least $t$ with this property.
\end{corollary}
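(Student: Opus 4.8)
The plan is to observe that the hypothesis on $T$ determines it completely, and then to translate the statement into the star-relabeling language already developed. A transposition moves exactly two elements, so the transpositions that move a fixed element $x\in\{0,1,\ldots,n-1\}$ are precisely the $n-1$ transpositions $(x,y)$ with $y\neq x$. Since $|T|=n-1$, this forces $T=\{(x,y): y\neq x\}$. After renaming the elements so that $x=0$, we have $T=\{(0,i): i\in\{1,2,\ldots,n-1\}\}=\{f_1,\ldots,f_{n-1}\}$, exactly the flip functions on the star $G=K_{1,n-1}$ with center $v_0$.

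Next I would set up the dictionary between compositions of transpositions from $T$ and mutation sequences on $G$. Starting from the labeling $L'_V$ with $L'_V(v_i)=i$, applying the flip $f_i$ to a labeling given by a permutation $\tau$ (meaning $L_V(v_j)=\tau(j)$ for all $j$) produces the labeling given by $\tau\cdot(0,i)$, because $v_0$ and $v_i$ exchange their labels. Hence applying $f_{i_1},f_{i_2},\ldots,f_{i_k}$ in order to $L'_V$ yields the labeling corresponding to the permutation $(0,i_1)(0,i_2)\cdots(0,i_k)$, and conversely every length-$k$ word over $T$ arises this way. So expressing a permutation $\pi\in S_n$ as a composition of $k$ transpositions from $T$ is exactly the same as evolving $L'_V$ into the labeling $L_V$ with $L_V(v_i)=\pi(i)$ in $k$ flips.

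With this dictionary, the corollary is a restatement of Observation~\ref{obs:star-tight}. The existence part---that every $\pi$ is a composition of at most $\floor{3(n-1)/2}$ transpositions from $T$---is precisely the assertion that any labeled star on $n$ vertices evolves into any other in at most $\floor{3(n-1)/2}$ mutations. For optimality, Observation~\ref{obs:star-tight} (equivalently Theorem~\ref{thm:star-complete}) supplies a labeling $L_V$, corresponding to some $\pi_0\in S_n$, with $q(L_V)=\floor{3(n-1)/2}$; by Theorem~\ref{thm:star-complete} every mutation sequence realizing $\pi_0$ has length $q(L_V)+2k\geq\floor{3(n-1)/2}$, so no smaller universal bound $t$ can suffice. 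The trivial cases $n=1,2$ are checked directly.

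I do not expect a genuine obstacle here, since the corollary is essentially a change of vocabulary. The only point requiring a little care is getting the direction of composition right in the dictionary of the second paragraph---equivalently, observing that $T$ generates $S_n$ and that the left/right multiplication conventions are consistent with the flip operation---but this is exactly the computation already carried out in the proof of Lemma~\ref{lmm:cycle-upper}.
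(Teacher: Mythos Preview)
Your proposal is correct and matches the paper's approach: the paper presents Corollary~\ref{cor:3/2} as an immediate group-theoretic reinterpretation of Observation~\ref{obs:star-tight}, and you have spelled out precisely that translation, including the observation that the hypothesis forces $T=\{(x,y):y\neq x\}$. Your caution about the left/right composition convention is well placed but, as you note, is already handled by the computation in Lemma~\ref{lmm:cycle-upper}.
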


We conclude this section by some observations that generalize even
further what we have done for the path and the star, but first we need
some additional notation and basic results.

For $n\in\nats$ let $K_n$ be the complete graph on $n$ vertices, and
let $V(K_n) = \{v_1,\ldots,v_n\}$ be a fixed numbering of the
vertices. Clearly, for each edge $e = \{v_i,v_j\}$ of $K_n$ there is a
corresponding transposition $\tau_e = (i,j)$ in the symmetric group
$S_n$ on $\{1,2,\ldots,n\}$, and vice versa, for each transposition
$\tau = (i,j)\in S_n$ yields an edge $e_{\tau} = \{v_i,v_j\}$ of
$K_n$. This correspondence is 1-1 in the sense that $e_{\tau_e} = e$
and $\tau_{e_{\tau}} = \tau$ for every $e$ and every $\tau$. For edges
$e_1,\ldots, e_m$ of $K_n$ let $G[e_1,\ldots,e_m]$ be the simple graph
induced (or formed) by these edges. In light of
Theorem~\ref{thm:upper} the following observation is clear.
\begin{observation}
\label{obs:spanning-tree}
The transpositions $\tau_1,\ldots,\tau_m \in S_n$ generate the
symmetric group $S_n$ if and only if the graph
$G[e_{\tau_1},\ldots,e_{\tau_m}]$ contains a spanning tree of
$K_n$. In particular, $m\geq n-1$ must hold.
\end{observation}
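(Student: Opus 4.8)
The plan is to prove the two implications by passing back and forth between the group-theoretic language of generation and the combinatorial language of connectivity, with Theorem~\ref{thm:upper} supplying the one substantive direction. Throughout I would write $H = G[e_{\tau_1},\ldots,e_{\tau_m}]$ for the simple graph on $\{v_1,\ldots,v_n\}$ whose edge set is $\{e_{\tau_1},\ldots,e_{\tau_m}\}$. The dictionary I would set up first is the obvious one: a single flip along an edge $\{v_i,v_j\}$ of a vertex-labeled graph acts on the labels exactly as the transposition $(i,j) = \tau_{\{v_i,v_j\}}$, so a sequence of flips along edges $e_1,\ldots,e_k$ realizes the composition $\tau_{e_k}\cdots\tau_{e_1}$ of the corresponding transpositions.

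For the direction ($\Leftarrow$), I would assume $H$ contains a spanning tree $T$ of $K_n$ and invoke Theorem~\ref{thm:upper}. Its proof is constructive and uses only flips along the edges of one fixed spanning tree; applied with $T$ in the role of that spanning tree, it shows that starting from the identity labeling $L_V(v_i)=i$ we can reach an arbitrary labeling $L'_V$ using only flips along edges of $T$. Reading that flip sequence through the dictionary above exhibits the permutation carrying $L_V$ to $L'_V$ as a product of transpositions from $\{\tau_e : e\in E(T)\}$. Since $L'_V$ was arbitrary, these transpositions generate $S_n$; as $E(T)\subseteq\{e_{\tau_1},\ldots,e_{\tau_m}\}$, so do $\tau_1,\ldots,\tau_m$.

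For the direction ($\Rightarrow$), I would argue the contrapositive. If $H$ is disconnected (or fails to touch some vertex), pick a nonempty proper set $A\subsetneq\{v_1,\ldots,v_n\}$ that is a union of connected components of $H$, and let $B$ be its complement; no $e_{\tau_i}$ joins $A$ to $B$. Then each $\tau_i$ moves two indices inside $A$ or two indices inside $B$, so every element of $\langle\tau_1,\ldots,\tau_m\rangle$ stabilizes the index set of $A$ (and that of $B$). Since $n\geq 2$ and both parts are nonempty, a transposition swapping an index in $A$ with one in $B$ lies outside this subgroup, so the $\tau_i$ do not generate $S_n$. Hence, if they do generate $S_n$, then $H$ is connected and spans all $n$ vertices, so $H$ contains a spanning tree of $K_n$; such a tree has $n-1$ edges, each among the $e_{\tau_i}$, forcing $m\geq n-1$.

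I do not anticipate a real obstacle here — indeed, as the surrounding text notes, the statement is essentially immediate given Theorem~\ref{thm:upper}. The only point requiring a little care is to state the flip/transposition dictionary cleanly (including the order of composition) and to confirm that the construction in Theorem~\ref{thm:upper} really does confine itself to a single spanning tree; the $(\Rightarrow)$ direction is a routine invariant argument, and the final count $m\geq n-1$ is then immediate.
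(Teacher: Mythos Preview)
Your proposal is correct and follows the paper's intended route: the paper gives no proof beyond the sentence ``In light of Theorem~\ref{thm:upper} the following observation is clear,'' and your $(\Leftarrow)$ direction is exactly the unpacking of that remark, while your $(\Rightarrow)$ contrapositive via the invariant partition is the standard companion argument the paper leaves implicit.
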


Consider now a connected simple graph $G = (V(G), E(G))$ on $n$
vertices, where $V(G) = \{v_1,\ldots, v_n\}$ is a fixed numbering.  As
before, a vertex labeling $L_V : V(G) \rightarrow \{1,2,\ldots,n\}$
corresponds to a permutation $\pi$ of $\{1,2,\ldots,n\}$ in $S_n$.
Since $G$ is connected, it contains a spanning tree; and hence, each
vertex labeling $L_V$ of $G$ can be transformed to any other labeling
$L'_V$ of $G$ by a sequence of edge flips or mutations. As for the
path and star, we have in general the following.
\begin{theorem}
\label{thm:general-G}
Let $G$ be a connected simple graph with $V(G) = \{v_1,\ldots, v_n\}$.
For vertex labelings $L_V, L'_V : V(G) \rightarrow \{1,2,\ldots,n\}$
there exists a symmetric nonnegative parameter $p_G(L_V,L'_V)$ and a
function $p_G(n)$ such that we have the following:
\begin{enumerate}
  \item The labeling $L_V$ can be transformed into $L'_V$ in exactly $t$
edge flips if and only if $t = p_G(L_V,L'_V) + 2k$ for some nonnegative
integer $k$.
  \item Every labeling $L_V$ can be transformed into another labeling
$L'_V$ in at most $t$ edge flips if and only if $t\geq p_G(n)$.
\end{enumerate}
\end{theorem}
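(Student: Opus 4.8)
The plan is to mimic, in the setting of a general connected graph $G$, the two structural facts that made the path and the star work: first, that there is a well-defined invariant attached to labelings whose value changes by exactly $\pm 1$ under a single flip, and second, that one can always realize the transformation using a number of flips equal to the current value of that invariant. For part~1, I would define $p_G(L_V,L'_V)$ to be the minimum number of edge flips needed to transform $L_V$ into $L'_V$; this is well-defined and finite by connectivity of $G$ (equivalently, by Observation~\ref{obs:spanning-tree}, the transpositions corresponding to the edges of $G$ generate $S_n$). Symmetry $p_G(L_V,L'_V) = p_G(L'_V,L_V)$ is immediate since reversing a flip sequence is again a flip sequence. That $t = p_G(L_V,L'_V)$ flips suffice is true by definition; that $t = p_G(L_V,L'_V) + 2k$ flips suffice follows by appending $k$ repetitions of any fixed flip followed by its undo, exactly as in the remark after Observation~\ref{obs:tightpath}.

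The substance of part~1 is the ``only if'' direction: if $L_V$ evolves into $L'_V$ in exactly $t$ flips, then $t - p_G(L_V,L'_V)$ is even. Here I would introduce the group-theoretic translation used implicitly throughout $\S$\ref{sec:exact}: identifying $L_V, L'_V$ with permutations $\pi, \pi'$ of $\{1,\ldots,n\}$, an edge flip corresponds to right-multiplication by a transposition $\tau_e$, so a flip sequence from $L_V$ to $L'_V$ witnesses $\pi' = \pi \tau_{e_1}\cdots\tau_{e_t}$, i.e.\ $\pi^{-1}\pi' = \tau_{e_1}\cdots\tau_{e_t}$ is a product of $t$ transpositions. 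The classical parity theorem for permutations (proved quantitatively in the Remark following Theorem~\ref{thm:path-complete}) says the sign of $\pi^{-1}\pi'$ equals $(-1)^t$, so the parity of $t$ is determined by $L_V$ and $L'_V$ alone. In particular $t$ and $p_G(L_V,L'_V)$ have the same parity, giving $t = p_G(L_V,L'_V) + 2k$ with $k \geq 0$ (nonnegativity because $p_G$ is the minimum). Conversely any such $t$ is achievable by the padding argument above, completing part~1.

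For part~2, I would define $p_G(n) = \max\{\, p_G(L_V,L'_V) : L_V, L'_V \text{ labelings of } G\,\}$, the diameter of the associated Cayley graph on $S_n$ with connection set $\{\tau_e : e \in E(G)\}$. Then ``every $L_V$ can be transformed into every $L'_V$ in at most $t$ flips'' holds iff $t$ is at least this maximum, which is the definition of $p_G(n)$; this direction is essentially tautological once $p_G(L_V,L'_V)$ is in hand, and the finiteness of the max is clear since there are only $n!$ labelings. One may note in passing that Theorem~\ref{thm:upper} gives $p_G(n) \le n(n-1)/2$ and Observation~\ref{obs:lower} shows this is attained for $G = P_n$, while Observation~\ref{obs:star-tight} gives $p_G(n) = \lfloor 3(n-1)/2 \rfloor$ for $G = K_{1,n-1}$, so the abstract $p_G(n)$ specializes correctly.

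The main obstacle is purely one of emphasis rather than difficulty: the theorem as stated asserts only the \emph{existence} of the parameter and the function, so the proof is really a matter of making the right definitions ($p_G$ as a minimum distance, $p_G(n)$ as a max over all pairs) and then citing the permutation parity theorem to pin down the ``only if'' half of part~1. The one point that deserves a careful sentence is the reduction from ``flip on $G$'' to ``right-multiplication by a transposition $\tau_e$ with $e \in E(G)$'' together with the observation that the set of such $\tau_e$ generates $S_n$ precisely because $G$ is connected (Observation~\ref{obs:spanning-tree}); everything else is bookkeeping. I would therefore keep the proof short, flagging that unlike the path and star cases, no combinatorially explicit formula for $p_G(L_V,L'_V)$ or $p_G(n)$ is claimed here — computing these for general $G$ is left open.
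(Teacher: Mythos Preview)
Your proposal is correct and follows essentially the same approach as the paper: define $p_G(L_V,L'_V)$ as the minimum number of flips, establish symmetry by reversing sequences, obtain $p_G(L_V,L'_V)+2k$ by padding with repeated flips, and invoke the parity of permutations (each flip being a transposition) for the ``only if'' direction; then set $p_G(n)$ to be the maximum of $p_G(L_V,L'_V)$ over all pairs. The paper's proof is terser and cites Theorem~\ref{thm:upper} for the finiteness/upper bound on $p_G(n)$, whereas you phrase things via the Cayley graph and Observation~\ref{obs:spanning-tree}, but there is no substantive difference.
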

\begin{proof}
For a given graph $G$ and given vertex labelings $L_V$ and $L'_V$ of
$G$, we define the parameter $p_G(L_V,L'_V)$ as the minimum number of
edge flips needed to transform $L_V$ into $L'_V$. This existence is
guaranteed since every nonempty subset of $\nats\cup \{0\}$ contains a
least element. If $L_V$ can be transformed into $L'_V$ in $t$ edge
flips, then by reversing the process $L'_V$ can be transformed into
$L_V$ in $t$ edge flips as well, so $p_G(L_V,L'_V)$ is clearly
symmetric. By repeating the last edge flip an even number of times, it
is clear that $L_V$ can be transformed into $L'_V$ in $t + 2k$ edge
flips. Assume that $L_V$ can be transformed into $L'_V$ in $t'$ edge
flips. By viewing the edge flips of $t$ and $t'$ as permutations of
$S_n$, they must have the same parity, so $t - t'$ must be even. This
completes the proof of the first part.

By Theorem~\ref{thm:upper} we have that $p_G(L_V,L'_V)\leq n(n-1)/2$
for all vertex labelings $L_V$ and $L'_V$ of $G$\@. Hence, the maximum
of $p_G(L_V,L'_V)$ among all pairs of vertex labelings $L_V$ and
$L'_V$ is also at most $n(n-1)/2$.  Letting $p_G(n)$ be this very
maximum, the second part clearly follows.
\end{proof}
{\sc Remark:} Using the notation of Theorem~\ref{thm:general-G}, what
we have in particular is (i) $p_G(n) \leq n(n-1)/2$ for every
connected graph $G$ on $n$ vertices, (ii) $p_{P_n}(n) = n(n-1)/2$, the
classical result on the number of inversions by Muir~\cite{Muir-1960},
and (iii) $p_{K_{1,n-1}}(n) = \floor{3(n-1)/2}$ for the star.

\section{Relabeling with Privileged Labels}
\label{sec:intractable}

In this section we describe the last variants of the relabeling
problem that we consider in this paper.  We impose an additional
restriction on the flip or mutate operation. Some labels are designated as
{\em privileged}. Our restricted mutations can only take place if 
{\em at least\/} one label of the pair to be mutated is a privileged label.
The problem can be defined for vertices and for edges as follows.

\begin{definition}
{\bf (}{\sc Vertex Relabeling with Privileged Labels Problem}{\bf)}\\
\label{def:privileged}
\hspace*{-.11in}
{\sc Instance:} A graph $G$, labelings $L_V$ and $L'_V$, a nonempty
set $S \subseteq \{ 1, 2, \ldots, n \}$ of privileged labels,
and $t \in\nats$.\\
{\sc Question:} Can labeling $L_V$ evolve into $L'_V$ in
$t$ or fewer restricted vertex mutations?
\end{definition}
\begin{definition}
{\bf (}{\sc Edge Relabeling with Privileged Labels Problem}{\bf)}\\
\label{def:privilegededge}
\hspace*{-.11in}
{\sc Instance:} A graph $G$, labelings $L_E$ and $L'_E$, a nonempty
set $S \subseteq \{ 1, 2, \ldots, m \}$ of privileged labels, and $t \in\nats$.\\
{\sc Question:} Can labeling $L_E$ evolve into $L'_E$ in
$t$ or fewer restricted edge mutations?
\end{definition}
The problems in Definitions~\ref{def:privileged}
and~\ref{def:privilegededge} are increasingly restricted as the
number of privileged labels decreases. Of course, one question is
whether the problems are solvable at all. If $|S|=1$, the {\sc Vertex
Relabeling with Privileged Labels Problem} can be reduced to the {\sc
($n \times n$)-Puzzle Problem}, in which half of the starting
configurations are not solvable~\cite{storey1879}.  This result proved
in~\cite{Ka} shows that the {\sc Vertex Relabeling with Privileged
Labels Problem} is {\em NP}-complete.
\begin{definition}{\bf (}{\sc ($n \times n$)-Puzzle Problem}{\bf )}\\
\label{def:nnpuzzle}
\hspace*{-.11in}
{\sc Instance:} Two $n \times n$ board configurations $B_1$ and
$B_2$, and $k \in \nats$.\\
{\sc Question:} Is there a sequence of at most $k$ moves that
transforms $B_1$ into $B_2$?
\end{definition}
By reducing the {\sc ($n \times n$)-Puzzle Problem} to the {\sc Vertex
Graph Relabeling with Privileged Labels Problem}, by taking $G$ as an
$n \times n$ mesh, $L_V$ corresponding to $B_1$, $L'_V$ corresponding
to $B_2$, $T = \{ n^2 \}$ corresponding to the blank space, and $t = k$,
it is not hard to see that the instance of the
{\sc ($n \times n$)-Puzzle Problem} is ``yes'' if and only if the
answer to the constructed instance of the {\sc Vertex Graph Relabeling with
Privileged Labels Problem} is also ``yes''. We summarize in the following.
\begin{observation}{\bf (}{\sc Intractability, Privileged Labels}{\bf )}
\label{obs:npcomplete}
The {\sc Vertex Graph Relabeling with Privileged Labels Problem} is
NP-complete.
\end{observation}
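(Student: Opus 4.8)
The plan is to establish both directions separately: the {\sc Vertex Relabeling with Privileged Labels Problem} lies in NP, and it is NP-hard; NP-completeness then follows.

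For NP-hardness I would carry out exactly the reduction sketched just above the observation, from the {\sc ($n\times n$)-Puzzle Problem}, which is known to be NP-complete. Given an instance $B_1, B_2, k$ of the puzzle, build $G$ to be the $n\times n$ grid graph with its $n^2$ vertices fixed-numbered, let $L_V$ record $B_1$ with the blank cell assigned label $n^2$, let $L'_V$ record $B_2$ in the same way, set the privileged set to $S = \{n^2\}$, and set $t = k$. This is computable in logarithmic space, hence certainly in polynomial time. The only thing to check is the move correspondence: a legal puzzle move slides a tile orthogonally into the hole, which under the encoding is precisely a restricted vertex mutation exchanging the privileged label $n^2$ with an adjacent vertex's label; conversely, since $n^2$ is the \emph{only} privileged label, every restricted mutation must move $n^2$ and so is the image of a legal puzzle move. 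Hence sequences of at most $k$ puzzle moves taking $B_1$ to $B_2$ are in bijection with sequences of at most $t$ restricted mutations taking $L_V$ to $L'_V$, and the two instances have the same answer.

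For membership in NP the obvious certificate---the list of mutations performed---is not immediately of polynomial size, because $t$ is part of the input and may be encoded in binary and thus be exponentially large. I would fix this by first arguing that whenever $L_V$ can be transformed into $L'_V$ by restricted mutations at all, it can be done in a number of mutations bounded by a fixed polynomial in $n$: for $|S| = 1$ the reachable configurations and the restricted-mutation distances between them are exactly those of the generalized sliding-tile puzzle on the connected graph $G$, whose state graph has polynomial (indeed $O(n^3)$) diameter by a standard constructive routing argument in the spirit of Wilson~\cite{wilson1974}; for $|S| \geq 2$ one fixes a single privileged label and notes that the moves involving it alone already realize this bound. The verifier then guesses a sequence of at most $\min\{t,\, c n^{3}\}$ restricted mutations, checks in polynomial time that each step swaps the labels of two adjacent vertices at least one of which lies in $S$, and checks that the last labeling equals $L'_V$; if $t$ exceeds the diameter bound it simply certifies reachability within that bound, and since the question asks for ``$t$ or fewer'' mutations no parity condition intervenes.

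The main obstacle is precisely this polynomial bound on solution length needed for membership in NP; the NP-hardness half is essentially a verbatim translation of an already-known intractability result, and the remaining ingredients---the identification of legal puzzle moves with restricted mutations at the privileged label, and the polynomial-time computability of the reduction---are routine. So the write-up should concentrate its effort on justifying (or citing precisely) the diameter bound for the restricted-mutation graph, after which the two halves assemble immediately.
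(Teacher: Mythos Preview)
Your NP-hardness half is exactly the paper's argument: the reduction from the {\sc $(n\times n)$-Puzzle} with $G$ the $n\times n$ grid, the blank encoded as the unique privileged label $n^2$, and $t=k$, together with the bijection between legal slides and restricted mutations at that label. The paper itself does not argue NP membership in the text---it simply records the observation as proved in~\cite{Ka}---so your attempt to supply a direct membership proof goes beyond what is written here, and you correctly identify the real issue (binary $t$) and the standard remedy (a polynomial diameter bound on the restricted-mutation graph). For $|S|=1$ the $O(n^3)$ bound you quote is right, though the pertinent source is the pebble-motion literature (Kornhauser--Miller--Spirakis) rather than Wilson~\cite{wilson1974}, whose theorem concerns reachability, not diameter.

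Your step for $|S|\geq 2$, however, does not work as written. Fixing one privileged label $p\in S$ and using only moves involving $p$ confines you to a \emph{proper} subset of the configurations reachable under the full $S$, so it cannot bound the diameter you need. Concretely, on $P_4$ with initial labeling $(1,2,3,4)$ and $S=\{1,2\}$, the target $(3,1,2,4)$ is reached in two restricted mutations, yet it is unreachable with $S=\{1\}$ alone (the relative left-to-right order of $2,3,4$ is invariant) and equally unreachable with $S=\{2\}$ alone (the order of $1,3,4$ is invariant). To repair this you need a genuine diameter bound for arbitrary $S$: one route is to treat the privileged labels as $|S|$ unlabeled holes, invoke the multi-hole pebble-motion bound to place the non-privileged labels in $O(n^3)$ moves, and then argue separately that the residual permutation of the privileged labels among their (now fixed) positions can be realized in polynomially many further restricted mutations. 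That last step is not automatic and needs its own argument; in any case it is more than the one-liner you wrote.
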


In Theorem~\ref{thm:same} we
proved that the {\sc Vertex Relabeling Problem} is {\em NC\/}$^1$
many-one reducible to the {\sc Edge Relabeling Problem}, however, that
reduction does not suffice when talking about the versions of the
problems involving privileged labels.  We do not yet know if the {\sc
Edge Relabeling with Privileged Labels Problem} with $|S| = 1$ is {\em
NP}-complete.  It is interesting to note that many other similar games
and puzzles such as the
{\sc Generalized Hex Problem}~\cite{eventarjan1976},
{\sc ($n \times n$)-Checkers Problem}~\cite{fraenkeletal1978},
{\sc ($n \times n$)-Go Problem}~\cite{lichtensteinsipser1978}, and the
{\sc Generalized Geography Problem}~\cite{schaefer1978a} are also {\em NP\/}-complete.

Prior to Observation~\ref{obs:npcomplete} it was still open whether some other unsolvable
instances of the {\sc Vertex/Edge Relabeling with Privileged Labels
Problems} existed. However, we provide some simple examples of
unsolvable instances in this section and provide some interesting
characterizations of both solvable and unsolvable instances of these
problems. We begin with an example.

{\sc Example~A:} Let $n\geq 2$ and consider two vertex labelings $L_V$
and $L'_V$ of the path $P_n$, where we have precisely $k$ privileged
labels $p_1,\ldots,p_k$, where $k\in \{0,1,\ldots, n-2\}$.  For a
fixed horizontal embedding of $P_n$ in the plane, assume the labelings
are given in the following left-to-right order:
\begin{eqnarray*}
L_V  & : & (p_1,\ldots,p_k,1,2,3,\ldots,n-k), \mbox{ and} \\
L'_V & : & (p_1,\ldots,p_k,2,1,3,\ldots,n-k).
\end{eqnarray*}
Note that by any restricted mutation, where one of the labels are
among $\{p_1,\ldots,p_k\}$, the relative left/right order of the
non-privileged labels will remain unchanged. Since the order of the
two non-privileged labels $1$ and $2$ in $L'_V$ is different from the
one of $L_V$, we see that it is impossible to evolve $L_V$ to
$L'_V$ by restricted mutations only. Note that we can push these
labels onto the edges by adding one more edge to the path.  This
example yields the following theorem.
\begin{theorem}{\bf (}{\sc General Insolubility, Privileged Labels}{\bf )}\\
\label{thm:gen-non}
\hspace*{-.11in}
Among all connected vertex labeled graphs on $n$ vertices with $k$
privileged labels where $k\in\{0,1,\ldots,n-2\}$, the {\sc Vertex
Relabeling with Privileged Labels Problem} is, in general, unsolvable.
Among all connected edge labeled graphs on $m$ edges with $k$
privileged labels where $k\in\{0,1,\ldots,m-2\}$, the {\sc Edge
Relabeling with Privileged Labels Problem} is, in general, unsolvable.
\end{theorem}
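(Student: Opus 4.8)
The plan is to argue that Example~A already contains everything we need, and that the theorem is essentially a packaging of that example together with the edge/vertex duality we have available. First I would observe that the statement is an \emph{existential} one: to show that a problem is ``in general unsolvable'' over a class of instances, it suffices to exhibit, for every relevant value of $n$ (respectively $m$) and every $k$ in the stated range, a single connected graph on $n$ vertices with $k$ privileged labels and two labelings $L_V,L_V'$ such that no sequence of restricted mutations transforms one into the other. Example~A does exactly this: for $P_n$ with privileged labels $p_1,\dots,p_k$ placed at the $k$ leftmost vertices and the non-privileged labels $1,2,\dots,n-k$ filling the remaining $n-k$ vertices, the two labelings $L_V$ and $L_V'$ differ only by swapping the non-privileged labels $1$ and $2$.

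The key step is the invariant argument, which I would spell out carefully since it is the heart of the proof. Fix the horizontal embedding of $P_n$ and consider the subsequence of labels read left-to-right that are \emph{non-privileged}. I claim that any restricted mutation leaves this subsequence (as a word) unchanged. Indeed, a restricted mutation swaps the labels on two adjacent vertices, and at least one of those two labels is privileged; if the swapped pair consists of one privileged and one non-privileged label, the privileged label simply moves past the non-privileged one, so the relative order among non-privileged labels is untouched, and if both are privileged the non-privileged subsequence is trivially unaffected. Hence the non-privileged subsequence is a genuine invariant of the reachability relation. In $L_V$ this subsequence is $1,2,3,\dots,n-k$, while in $L_V'$ it is $2,1,3,\dots,n-k$; these are distinct, so $L_V'$ is unreachable from $L_V$. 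This proves the first sentence of the theorem for all admissible $n$ and $k$ (for $k=0$ the invariant argument degenerates to: no mutation is ever permitted, so the two distinct labelings are trivially unreachable from each other).

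For the edge version I would do one of two things. The cleanest is the ``push onto the edges'' remark already indicated in Example~A: take the path $P_{m+1}$ on $m+1$ vertices, which has exactly $m$ edges lying in a line; the same invariant argument applies verbatim to the left-to-right sequence of \emph{non-privileged edge labels}, since a restricted edge mutation on a path swaps two adjacent edges and at least one carries a privileged label. So for every $m\geq 2$ and every $k\in\{0,1,\dots,m-2\}$ we obtain a connected edge-labeled graph on $m$ edges with $k$ privileged labels and two mutually unreachable labelings, establishing the second sentence. Alternatively one could invoke the $NC^1$ reduction machinery of Theorem~\ref{thm:same}, but since that reduction changes the number of privileged labels and the count of graph elements, the direct construction is both simpler and gives exactly the claimed parameter ranges. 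The only point requiring a little care—and the place I expect to have to be most precise—is confirming that the construction genuinely covers the \emph{full} stated range $k\in\{0,\dots,n-2\}$ (respectively $k\in\{0,\dots,m-2\}$): we need at least two non-privileged labels to perform the forbidden swap, which forces $n-k\geq 2$, i.e.\ $k\leq n-2$, matching the hypothesis exactly, and the degenerate endpoints $k=0$ and $k=n-2$ must be checked to still yield connected graphs with the stated number of privileged labels, which they do.
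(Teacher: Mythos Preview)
Your proposal is correct and follows exactly the paper's approach: the paper states that ``This example yields the following theorem,'' so Example~A \emph{is} the proof, and you have simply spelled out the invariant argument (left-to-right order of non-privileged labels on the path) and the edge version (``push these labels onto the edges by adding one more edge to the path,'' i.e.\ use $P_{m+1}$) in more detail than the paper does. Your added care with the endpoints $k=0$ and $k=n-2$ and your remark that the reduction of Theorem~\ref{thm:same} would not preserve the parameter count are welcome clarifications, but there is no substantive difference in method.
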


Note that it is clear that for any connected graph $G$ with all labels but
one being privileged, any mutation is a legitimate transformation, since
for any edge $e = \{u,v\}$ either the label on $u$ or $v$ is
privileged.  Hence, among all connected graphs on $n$ vertices with
$n-1$ privileged labels, the {\sc Vertex Relabeling with Privileged
Labels Problem} is solvable and in $P$\@.  A similar observation holds
for the {\sc Edge Relabeling with Privileged Labels Problem}.

Restricting now to the class of 2-connected simple graphs, we consider
a slight variation of Example~A.

{\sc Example~B:} Let $n\geq 3$ and consider two vertex labelings $L_V$
and $L'_V$ of the cycle $C_n$, where we have precisely $k$ privileged
labels $p_1,\ldots,p_k$, where $k\in \{0,1,\ldots, n-3\}$.  For a
fixed planar embedding of $C_n$, assume the labelings are given
cyclically in clockwise order as follows:
\begin{eqnarray*}
L_V  & : & (p_1,\ldots,p_k,1,2,3,\ldots,n-k), \mbox{ and}\\
L'_V & : & (p_1,\ldots,p_k,2,1,3,\ldots,n-k).
\end{eqnarray*}
Note that by any restricted mutation, where one of the labels are
among $\{p_1,\ldots,p_k\}$, the relative orientation (clockwise or
anti-clockwise) of the non-privileged labels $1$, $2$, and $3$ will
remain unchanged. Since the orientation of $1$, $2$, and $3$ in $L'_V$
is anti-clockwise, and the opposite of the clockwise order of $1$,
$2$, and $3$ in $L_V$, we see again that it is impossible to evolve
$L_V$ to $L'_V$ by restricted mutations.  Notice that we can push the
labels onto the edges.

We summarize the implication of Example~B in the following theorem.
\begin{theorem}{\bf (}{\sc 2-Connected Insolubility, Privileged Labels}{\bf )}\\
\label{thm:2-conn-non}
\hspace*{-.16in}
Among all 2-connected vertex labeled graphs on $n$ vertices with $k$
privileged labels where $k\in\{0,1,\ldots,n-3\}$, the {\sc Vertex
Relabeling with Privileged Labels Problem} is, in general, unsolvable.
Among all 2-connected edge labeled graphs on $m$ edges with $k$
privileged labels where $k\in\{0,1,\ldots,m-3\}$, the {\sc Edge
Relabeling with Privileged Labels Problem} is, in general, unsolvable.
\end{theorem}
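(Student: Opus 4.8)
The plan is to make Example~B rigorous and to transfer it to the edge setting. First I would fix $G = C_n$, the cycle on $n$ vertices, which is $2$-connected for every $n\geq 3$, and fix the instance exactly as in Example~B: privileged labels $p_1,\ldots,p_k$ with $0\leq k\leq n-3$, so that at least three of the labels, say $1$, $2$, $3$, are non-privileged, together with the two labelings $L_V$ and $L'_V$ listed clockwise as $(p_1,\ldots,p_k,1,2,3,\ldots,n-k)$ and $(p_1,\ldots,p_k,2,1,3,\ldots,n-k)$. These labelings differ, and when $k=0$ no restricted mutation is legal at all, so that case is immediate; hence I would assume $k\geq 1$.

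The heart of the argument is an invariant. I would read a labeling of $C_n$ as a cyclic sequence and look at the cyclic subword formed by the non-privileged labels in the order they occur around the cycle. A restricted mutation swaps the labels on two adjacent vertices, at least one of which carries a privileged label. If both are privileged, the non-privileged subword is literally unchanged. If exactly one is privileged, then a single non-privileged label is shifted one step around the cycle past a privileged label; it cannot move past another non-privileged label, because that would be a swap of two non-privileged labels, which is forbidden. Hence the cyclic order (up to rotation) of the non-privileged labels --- and in particular the clockwise-versus-anticlockwise orientation of the triple $1,2,3$ --- is preserved by every restricted mutation, hence by every finite sequence of them. In $L_V$ the triple $1,2,3$ reads clockwise, while in $L'_V$ it reads anticlockwise, since a single transposition within a $3$-element cyclic word reverses its orientation; so no value of $t$ makes this a {\sc yes} instance, proving the first statement.

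For the edge version I would use $L(C_n)\cong C_n$: an edge labeling of the cycle on $n$ vertices is the same data as a vertex labeling of its line graph, which is again a $2$-connected cycle, and restricted edge mutations correspond exactly to restricted vertex mutations (Theorem~\ref{thm:same} and the surrounding remarks). Alternatively, and perhaps more transparently, I would rerun Example~B directly on the $m=n$ edges of $C_n$, placing $p_1,\ldots,p_k$ with $k\leq m-3$ and the orientation obstruction on three non-privileged edge labels; the same invariant argument applies verbatim. The only point needing a little care is the bookkeeping at the extreme $k=n-3$ (respectively $k=m-3$), where exactly three labels are non-privileged and the orientation of that triple is the entire invariant --- but this is precisely the situation the orientation argument is designed for. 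The main obstacle, such as it is, is stating the cyclic-order invariant cleanly enough that the ``a non-privileged label can never jump over another non-privileged label'' step is manifestly correct; everything else is routine.
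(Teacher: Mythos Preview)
Your proposal is correct and follows essentially the same approach as the paper: the paper's ``proof'' is nothing more than Example~B itself, which uses exactly the cycle $C_n$ with the orientation invariant on the triple $1,2,3$ that you spell out, and for the edge case the paper merely remarks that one can push the labels onto the edges. Your write-up is more careful (handling $k=0$ explicitly, stating the cyclic-subword invariant precisely, and noting $L(C_n)\cong C_n$ for the edge version), but the underlying argument is identical.
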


We will now fully analyze the case where $G$ is connected and all but
two of the labels are privileged.
\begin{claim}
\label{clm:non-path}
If a simple graph is neither a path nor a cycle, then it has a
spanning tree that is not a path (and hence contains a vertex of
degree at least three).
\end{claim}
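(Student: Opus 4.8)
The plan is to prove the contrapositive in a structural form: I will show that any connected simple graph $G$ which has \emph{every} spanning tree being a path must itself be a path or a cycle. Equivalently, I assume $G$ is connected, simple, and is not a path, and I exhibit a spanning tree of $G$ containing a vertex of degree at least three, unless $G$ is a cycle. Since a tree is a path if and only if it has no vertex of degree $\geq 3$, this suffices.

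First I would dispose of the easy structural facts. A connected graph that is a path or a cycle has maximum degree at most two; conversely, a connected graph with maximum degree at most two is a path or a cycle. So if $G$ is neither a path nor a cycle, either $G$ has a vertex $v$ of degree at least three, or $G$ has maximum degree at most two and is therefore a path or a cycle---the latter being excluded. Hence we may assume there is a vertex $v$ with $\deg_G(v) \geq 3$; pick three distinct neighbors $x_1, x_2, x_3$ of $v$.

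The key step is to build a spanning tree that retains a degree-three vertex. I would take a breadth-first (or depth-first) search tree $T$ rooted at $v$. In any search tree rooted at $v$, the root is adjacent in $T$ to \emph{all} of its neighbors in $G$ (for BFS this is immediate since every neighbor of $v$ is discovered directly from $v$ at distance one; for DFS one can instead argue directly, or simply use BFS). Therefore $\deg_T(v) = \deg_G(v) \geq 3$, so $T$ is a spanning tree that is not a path, and it contains the vertex $v$ of degree at least three. This completes the argument.

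I expect the main obstacle to be purely expository rather than mathematical: making sure the two "folklore" characterizations---(a) a connected graph with $\Delta \leq 2$ is a path or a cycle, and (b) a tree with $\Delta \leq 2$ is a path---are either cited or dispatched in one line, and making sure the chosen traversal really does preserve the degree of the root. Using BFS makes the latter point essentially trivial, so I would phrase the proof with BFS from $v$ to keep everything clean. No delicate case analysis is needed.
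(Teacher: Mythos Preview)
Your proof is correct and follows essentially the same approach as the paper: both arguments first observe that a connected simple graph which is neither a path nor a cycle must contain a vertex $v$ of degree at least three, and then construct a spanning tree that preserves degree $\geq 3$ at $v$. The only cosmetic difference is the tree-construction primitive---the paper seeds Kruskal's algorithm with three edges incident to $v$, whereas you run BFS from $v$; your BFS variant is arguably cleaner since it makes the preservation of the root's degree immediate.
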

\begin{proof}
Let $G$ be a graph that is neither a path nor a cycle.  Then $G$
contains a vertex $u$ of degree greater than or equal to three.
Assigning the weight of one to each edge, we start by choosing three
edges with $u$ as an end-vertex and complete the construction of our
spanning tree using Kruskal's algorithm.
\end{proof}
\begin{claim}
\label{clm:swap}
Among vertex labeled trees, which are not paths, with exactly two
non-privileged labels, any two labels can be swapped using restricted
mutations.
\end{claim}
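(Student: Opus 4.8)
The idea is to isolate the one genuinely hard instance — directly exchanging the two non-privileged labels — and dispatch everything else by routine routing, using the degree-$\ge 3$ vertex of the tree as a ``roundabout.''  Throughout, fix the tree $T$ (not a path, hence possessing a vertex $v$ of degree at least three, since a tree with maximum degree $\le 2$ is a path), fix three neighbors $x,y,z$ of $v$, and let $a,b$ be the two non-privileged labels.  The basic mobility observation is this: because $a$ and $b$ are the \emph{only} non-privileged labels, a single non-privileged label can be walked along any path of $T$ whose interior vertices all carry privileged labels, one mutation per edge, and every such mutation is legal (each swap pairs the moving label with a privileged one).  Avoiding the vertex holding the other non-privileged label, either of $a,b$ can thus be carried to any prescribed vertex.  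We must show that for any two vertices $s,t$ we can transform any labeling into the one obtained by exchanging the labels on $s$ and $t$, leaving all others fixed.

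\textbf{Routine case.} If the labels currently on $s$ and $t$ are not $\{a,b\}$, then one of them, say the label on $s$, is privileged.  The desired exchange is the conjugate, along the $s$--$t$ path in $T$, of the adjacent swap at the last edge of that path; performed as a sequence of adjacent mutations, each step either moves the privileged label originally on $s$ (always legal) or, on the ``return leg,'' moves a non-privileged label past privileged labels.  The only way such a step can be illegal is if it would force $a$ and $b$ to be swapped directly; whenever that is about to happen we first park the blocking label in a branch at $v$ and reroute, exactly as in the hard case below.  I expect this case to be conceptually easy, though it needs some care in the bookkeeping.

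\textbf{Hard case (the crux).} Suppose $s,t$ hold $a$ and $b$.  Using the mobility observation, route $a$ onto $x$ and $b$ onto $y$ (parking one of them temporarily in a branch at $v$ if the two routes would otherwise collide); call the resulting labeling $C_1$, in which $a$ is on $x$, $b$ on $y$, and $v,z$ (like all remaining vertices) carry privileged labels, say $c$ on $v$ and $d$ on $z$.  Now apply the seven mutations on the edges $\{v,x\},\{v,z\},\{v,y\},\{v,x\},\{v,z\},\{v,y\},\{v,z\}$, in that order.  A direct simulation shows these return every privileged label to its place while exchanging $a$ and $b$; the key point is that the successive pairs of labels actually swapped are $(c,a),(a,d),(d,b),(b,c),(c,a),(a,d),(d,c)$, so each mutation involves $c$ or $d$, and $a$ is never swapped directly with $b$ — hence all seven are legal restricted mutations.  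This gadget is just the identity $(x\,y)=(v\,x)(v\,y)(v\,x)$ ``detoured'' through $z$ so as to sidestep the one forbidden move.  Finally, reverse the routing that produced $C_1$: since $a$ and $b$ are interchangeable as far as legality is concerned (legality only asks whether one of the two labels on an edge is privileged, and both $a,b$ are non-privileged), reversing a legal routing of $a$-to-$x$, $b$-to-$y$ is a legal routing that carries the now-exchanged labels back to $s,t$ and undoes the incidental shuffling of privileged labels.  The net effect is precisely the exchange of the labels on $s$ and $t$.

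\textbf{Main obstacle.} The conceptual content is entirely in the seven-move gadget, which shows that a degree-$\ge 3$ vertex suffices to simulate the forbidden direct swap of the two non-privileged labels; the real work is the bookkeeping of the routing-and-parking step (ensuring the two non-privileged labels never need to pass through each other) and the corresponding care in the routine case.
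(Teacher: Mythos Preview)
Your argument is correct and follows the same strategy as the paper: isolate the swap of the two non-privileged labels as the only nontrivial case, and realize that swap by conjugating a local maneuver at a degree-$\ge 3$ vertex by a routing that brings $a,b$ close to the branch point.  The paper moves $a,b$ out to the leaf ends $u',v'$ of a maximal path containing $P(s,t)$ and then performs three clean path-swaps $SW(u',w')$, $SW(u',v')$, $SW(v',w')$ through a side vertex $w'$; you instead bring $a,b$ to two neighbors of the branching vertex, apply an explicit seven-move star gadget, and reverse the routing (your observation that legality is invariant under $a\leftrightarrow b$ is precisely what makes the reversal work).  These are two packagings of the same idea.

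One organizational point: your ``routine case'' is vaguer than it needs to be and leans on the hard case that only comes afterward.  Once the hard case is established, the routine case collapses to one line: every \emph{adjacent} transposition is realizable (either as a single legal mutation, or---when the adjacent pair currently holds $\{a,b\}$---via the hard-case procedure), and adjacent transpositions along the edges of a connected tree generate the full symmetric group, so every transposition is realizable.  Reordering the two cases and using this observation removes the ``park and reroute'' bookkeeping entirely.
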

\begin{proof}
Let $G = (V,E)$ be a tree that is not a path, and $L_V$ a labeling of
the vertices.  For any two distinct vertices $x$ and $y$ denote the
unique path between them by $P(x,y)$.

Assume that we want to swap the labels $L_V(u)$ and $L_V(v)$ on
vertices $u$ and $v$.  We first consider the case where all labels,
except possibly one, on $P(u,v)$, are privileged.  Restricting to
$P(u,v)$, there are $2\partial(u,v)-1$ legitimate mutations that swap
the labels on $u$ and $v$. (Here $\partial(u,v)$ denotes the distance
between $u$ and $v$ in the tree, or the length of $P(u,v)$.  This fact
was noted in the remark right after the proof of
Theorem~\ref{thm:path-complete}.)  Let us denote such a privileged swap by
$SW(u,v)$.

Consider next the case where the labels of $u$ and $v$ are both
non-privileged.  Let $u'$ and $v'$ be vertices such that the
$(u',v')$-path $P^*$ is of maximum length in the tree and such that it
contains $P(u,v)$ as a sub-path. Hence, the three paths $P(u',u)$,
$P(u,v)$, and $P(v,v')$ make up this maximum length path $P^*$. By the
maximality of $P^*$ and our assumption on the tree, there is an
internal vertex $w$ on $P^*$ (note $w\not\in\{u',v'\}$) of degree
three or more, and hence that has a neighbor $w'$ not on $P^*$.  We now
perform the following procedure of legitimate swaps:
\begin{enumerate}
  \item $SW(u,u')$ and $SW(v,v')$,
  \item $SW(u',w')$,
  \item $SW(u',v')$,
  \item $SW(v',w')$, \mbox{ and}
  \item $SW(u,u')$ and $SW(v,v')$.
\end{enumerate}
This procedure has legitimately swapped the labels on $u$ and $v$.

If at least one of the labels of $u$ and $v$ is privileged, but both
of the non-privileged labels do lie on $P(u,v)$, say $x$ and $y$, then
we can perform at least one of the swaps $SW(u,x)$ or $SW(y,v)$, say
$SW(u,x)$, after which we perform the swaps $SW(x,v)$ and $SW(u,x)$ to
complete the legitimate swap.  The case where $SW(y,v)$ was performed
first is handled similarly.  This completes the proof.
\end{proof}

We can now state the following lemma.
\begin{lemma}
\label{lmm:non-path-P}
Among vertex labeled trees, which are not paths, with exactly two
non-privileged labels, the {\sc Vertex Relabeling with Privileged
Labels Problem} is solvable and in $P$.
\end{lemma}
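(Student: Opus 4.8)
The plan is to show that among vertex labeled trees which are not paths and have exactly two non-privileged labels, the problem is always a \textsc{yes} instance whenever $t$ is large enough of the right parity, and moreover that the threshold value can be computed in polynomial time. First I would invoke Claim~\ref{clm:swap}: it already establishes that in such a tree any two labels can be swapped by a finite sequence of restricted (legitimate) mutations. Since the symmetric group $S_n$ is generated by transpositions, repeated application of Claim~\ref{clm:swap} shows that \emph{every} target labeling $L'_V$ is reachable from $L_V$; hence the instance is always solvable in principle, and the only question is the minimum number of flips and the membership in $P$.

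Next I would argue that the minimal number of restricted mutations needed to transform $L_V$ into $L'_V$ is bounded above by a polynomial in $n$, so that a \textsc{yes}/\textsc{no} decision reduces to comparing $t$ against a computable quantity. Each invocation of the swap procedure in Claim~\ref{clm:swap} uses $O(n)$ legitimate mutations (each $SW(x,y)$ costs $2\partial(x,y)-1\le 2n-1$ mutations, and there is a bounded number of such swaps in the procedure), and at most $n$ transpositions suffice to realize any permutation; so $O(n^2)$ restricted mutations always suffice. This gives a symmetric nonnegative parameter---call it the minimal restricted distance---analogous to $p_G(L_V,L'_V)$ in Theorem~\ref{thm:general-G}, and a parity argument identical to the one in that theorem (restricted mutations are still transpositions in $S_n$, so any two realizing sequences have the same parity) shows the set of achievable lengths is exactly this minimum plus $2\nats\cup\{0\}$.

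For the $P$ claim I would describe a polynomial-time algorithm: fix the spanning tree (Claim~\ref{clm:non-path} guarantees one that is not a path), and run a constructive routine that brings the non-privileged labels, and then each privileged label in turn, to its target position, mimicking the Pr\"ufer-style leaf-ordering argument of Theorem~\ref{thm:upper} but using the $SW$ gadget of Claim~\ref{clm:swap} to realize each needed transposition. This produces an explicit flip sequence of length $O(n^2)$; since the minimum restricted distance is at most this and is a fixed-parity invariant, the decision procedure simply checks whether $t$ is at least the length produced (after possibly shaving off pairs to match parity, or more carefully computing the true minimum) and of the correct parity---each of which is a polynomial-time computation. The main obstacle is making the ``true minimum'' rigorous: the clean way is to define $p_G^{\mathrm{priv}}(L_V,L'_V)$ as the minimum length of a legitimate flip sequence (well-defined since that set is nonempty by Claim~\ref{clm:swap} and bounded below by $0$), observe via the $S_n$ parity argument that all legitimate lengths are $p_G^{\mathrm{priv}}(L_V,L'_V)+2k$, and then note that since our constructive algorithm exhibits a legitimate sequence of polynomial length, $p_G^{\mathrm{priv}}$ is polynomially bounded and the instance is a \textsc{yes} instance iff $t\ge p_G^{\mathrm{priv}}(L_V,L'_V)$ and $t\equiv p_G^{\mathrm{priv}}(L_V,L'_V)\pmod 2$; deciding this is clearly in $P$.
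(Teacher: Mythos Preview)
Your core argument---invoke Claim~\ref{clm:swap} to realize an arbitrary transposition by restricted mutations, then appeal to the fact that $S_n$ is generated by transpositions---is exactly the paper's proof, which consists of a single sentence to that effect. For the ``solvable'' portion of the lemma you are in full agreement with the paper.

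Where you diverge is in your treatment of ``in $P$''. The paper reads this loosely: the procedure of Claim~\ref{clm:swap} is constructive and each swap costs $O(n)$ flips, so a transforming sequence of polynomial length can be \emph{produced} in polynomial time. You instead attempt to show that the \emph{decision problem with parameter $t$} lies in $P$, and here there is a gap. Your final characterization (``\textsc{yes} iff $t\ge p_G^{\mathrm{priv}}$ and $t\equiv p_G^{\mathrm{priv}}\pmod 2$'') has two issues. First, the parity clause is spurious: the question is ``$t$ \emph{or fewer}'', so once $t\ge p_G^{\mathrm{priv}}$ one simply uses exactly $p_G^{\mathrm{priv}}$ flips and the answer is \textsc{yes} regardless of the parity of $t$. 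Second, and more substantively, you never explain how to \emph{compute} $p_G^{\mathrm{priv}}(L_V,L'_V)$ in polynomial time; you only exhibit a polynomial upper bound via your constructive routine. A quantity being polynomially bounded is not the same as being polynomial-time computable---witness Observation~\ref{obs:npcomplete}, where the analogous minimum with a single privileged label is \emph{NP}-hard to determine. If you intend the stronger decision-problem reading of ``in $P$'', that step requires an actual algorithm for the exact minimum; if you intend the paper's weaker constructive reading, most of your last two paragraphs can be dropped.
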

\begin{proof}
Since any transformation from one labeling $L_V$ to another $L'_V$ is a
composition of transpositions, this follows from Claim~\ref{clm:swap}.
\end{proof}
We now have the following summarizing theorem.
\begin{theorem}{\bf (}{\sc Vertex Solubility, Two Privileged Labels}{\bf )}\\
\label{thm:conn-iff}
\hspace*{-.11in}
Among all connected vertex labeled graphs $G$ on
$n\geq 4$ vertices with all but two vertex labels privileged, the {\sc
Vertex Relabeling with Privileged Labels Problem} is solvable if
and only if $G$ is not a path.
\end{theorem}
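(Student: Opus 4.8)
The plan is to prove both directions of the biconditional. For the forward direction, I would argue the contrapositive: if $G$ is a path, then the {\sc Vertex Relabeling with Privileged Labels Problem} is, in general, unsolvable. This is exactly the content of {\sc Example~A} with $k = n-2$ privileged labels, since for a path any restricted mutation fixes the relative left-to-right order of the two non-privileged labels, so the labelings $L_V$ and $L'_V$ that differ only by swapping those two labels cannot be transformed into one another. Hence if the problem is (always) solvable, $G$ cannot be a path.

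For the reverse direction, suppose $G$ is a connected graph on $n\geq 4$ vertices with all but two labels privileged, and $G$ is not a path. I would first handle the case where $G$ is a cycle $C_n$ separately, and the case where $G$ is neither a path nor a cycle by appealing to Claim~\ref{clm:non-path}: in the latter case $G$ has a spanning tree $T$ that is not a path. Since any mutation on an edge of $T$ is in particular a legitimate mutation on $G$ (the privilege condition depends only on the labels, not on which edge is used), it suffices to show solvability on the non-path tree $T$, which is precisely Lemma~\ref{lmm:non-path-P}: the problem is solvable and in $P$ on vertex-labeled non-path trees with exactly two non-privileged labels. So in this case we are done. For $G = C_n$ with $n\geq 4$, I would note that $C_n$ contains a Hamiltonian path $P_n$ as a subgraph, but that alone is not enough since $P_n$ is a path; instead I would use the extra edge of the cycle directly. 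With two non-privileged labels and $n\geq 4$, one can route a non-privileged label all the way around the cycle using the closing edge, which has the effect of a cyclic rotation on the non-privileged pattern — combined with swaps along path-segments this generates the full transposition needed. Concretely, since a swap of two adjacent elements on a path segment takes $2\partial-1$ legitimate mutations (as in the remark after Theorem~\ref{thm:path-complete}), and the closing edge lets us change the cyclic orientation, any two labels can be swapped; hence any target labeling $L'_V$, being a composition of transpositions applied to $L_V$, is reachable.

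The main obstacle I anticipate is the cycle case $C_n$: unlike a non-path tree, a cycle has no vertex of degree $\geq 3$, so the maneuver in Claim~\ref{clm:swap} (parking labels at the ends of a maximal path and using a branch vertex $w$) is unavailable, and one must genuinely exploit the single extra edge closing the cycle to realize an odd permutation of the two non-privileged labels. I would make this precise by an explicit sequence of restricted mutations on $C_n$ that achieves an arbitrary transposition of the two non-privileged labels, checking that at every step at least one of the two mutated labels is privileged, which is automatic because all but two labels are privileged and we move only one non-privileged label at a time. Once an arbitrary transposition is realizable in both the cycle case and the non-path-tree case, the conclusion — solvability for every pair $L_V, L'_V$, and polynomial-time computability of a mutation sequence — follows exactly as in Lemma~\ref{lmm:non-path-P}, because any permutation of $\{1,\ldots,n\}$ decomposes into at most $n-1$ transpositions and each transposition is realized by $O(n)$ mutations.
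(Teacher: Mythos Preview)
Your overall structure matches the paper's proof: Example~A handles the path (forward direction), and for non-paths you split into the cycle $C_n$ versus everything else, the latter case going through Claim~\ref{clm:non-path} and Lemma~\ref{lmm:non-path-P} exactly as the paper does.

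The only real difference is your treatment of $C_n$. The paper does not try to realize arbitrary transpositions there; instead it argues in two phases: first move the two non-privileged labels to their target vertices by pushing each clockwise or anti-clockwise along arcs of privileged labels (with $n\geq 4$ there is always room to route one around the other), and then, with those two already in place, sort the remaining privileged labels ``as on a path'' --- at that point every needed adjacent swap automatically involves a privileged label. This is shorter than your plan precisely because it never needs to manufacture a clean transposition of the two non-privileged labels. Your transposition-based route also works, but be careful with the details you sketch: the $2\partial-1$ path-swap from the remark after Theorem~\ref{thm:path-complete} is legitimate only when the chosen arc avoids the \emph{other} non-privileged label, and in particular it does not directly give the swap of the two non-privileged labels themselves (the central mutation in that sequence would be between two non-privileged labels and hence illegal); you would need a detour through a privileged helper vertex, in the spirit of Claim~\ref{clm:swap}. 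Also, ``the closing edge lets us change the cyclic orientation'' is not quite the right picture --- two points on a cycle carry no cyclic orientation to change; what the extra edge actually buys you is a second arc along which one non-privileged label can travel to reach the far side of the other. Once that is made precise (which is routine), your argument goes through and yields the same polynomial bound.
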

\begin{proof}
We see from Example~A that for $n\geq 2$ there are labelings of the
vertices of the path $P_n$ that cannot evolve into one another
using restricted mutations.

If $G$ is a cycle on $n\geq 4$ vertices, we can first move the labels
of the non-privileged labels to their desired places by using
appropriate clockwise and/or anti-clockwise sequences of mutations,
and then move all the privileged labels to their places using
mutations as on a path.

If $G$ is neither a path nor a cycle, then by Claim~\ref{clm:non-path}
$G$ has a spanning tree $T$ that is not a path. Restricting to $T$ we
can by Lemma~\ref{lmm:non-path-P} move all the labels to their
desired places within $T$ and hence within $G$.
This completes our proof.
\end{proof}
We obtain the following corollary as a consequence of
Theorems~\ref{thm:conn-iff} and~\ref{thm:same}.
\begin{theorem}{\bf (}{\sc Edge Solubility, Two Privileged Labels}{\bf )}\\
\label{thm:conn-iff-edge}
\hspace*{-.11in}
Among all connected edge labeled graphs $G$ on $n\geq 4$ edges with
all but two edge labels privileged, the {\sc Edge Relabeling
with Privileged Labels Problem} is solvable if and only if $G$ is not
a path.
\end{theorem}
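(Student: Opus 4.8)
The plan is to use the line-graph reduction of Theorem~\ref{thm:same}, which sends the {\sc Edge Relabeling Problem} to the {\sc Vertex Relabeling Problem}, and then invoke Theorem~\ref{thm:conn-iff}. First I would observe that, unlike the vertex-to-edge reduction (which adds pendant edges and, as already noted, fails for privileged labels), the edge-to-vertex reduction via line graphs respects privilege perfectly: a vertex labeling of $L(G) = (E,E')$ is literally an edge labeling of $G$ on the same label set, two vertices of $L(G)$ are adjacent exactly when the corresponding edges of $G$ share an endpoint, and hence a single restricted edge mutation of $G$ (a swap of labels on two edges sharing an endpoint, at least one of them privileged) corresponds to exactly one restricted vertex mutation of $L(G)$, and conversely. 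Keeping the same privileged set $S$, it follows that the {\sc Edge Relabeling with Privileged Labels Problem} on $G$ with privileged set $S$ is solvable if and only if the {\sc Vertex Relabeling with Privileged Labels Problem} on $L(G)$ with the same $S$ is solvable.

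Next I would check the hypotheses of Theorem~\ref{thm:conn-iff}. Since $G$ is connected with $n\geq 4$ edges, $L(G)$ is a connected simple graph on exactly $n\geq 4$ vertices; and since $G$ has all but two of its $n$ edge labels privileged, $L(G)$ has all but two of its $n$ vertex labels privileged. Thus Theorem~\ref{thm:conn-iff} applies and tells us the vertex problem on $L(G)$ is solvable if and only if $L(G)$ is not a path.

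It then remains to prove the purely graph-theoretic fact that, for a connected graph $G$, $L(G)$ is a path if and only if $G$ is a path. One direction is immediate, since $L(P_k) = P_{k-1}$. For the converse, suppose $G$ is connected but not a path. If $G$ has a vertex of degree at least three, then three of the edges incident to that vertex are pairwise adjacent in $L(G)$, so $L(G)$ contains a triangle and cannot be a path (a path is acyclic). Hence $G$ has maximum degree at most two, so being connected it is a path or a cycle; and $L(C_k) = C_k$ is a cycle, not a path. In every case $L(G)$ is not a path. Combining the three steps: the edge problem on $G$ is solvable iff the vertex problem on $L(G)$ is solvable iff $L(G)$ is not a path iff $G$ is not a path, which is the theorem.

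The only point requiring care is the claim in the first paragraph: one must verify that the correspondence between edge labelings of $G$ and vertex labelings of $L(G)$ genuinely matches restricted mutations one-for-one and carries the privileged set over unchanged, since Theorem~\ref{thm:same} as stated concerns only the unrestricted problems (and the reverse reduction there really does break in the privileged setting). Everything else---the line graphs of paths and cycles, and the triangle argument---is routine.
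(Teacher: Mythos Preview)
Your proposal is correct and follows exactly the route the paper indicates: the paper derives Theorem~\ref{thm:conn-iff-edge} as a consequence of Theorems~\ref{thm:conn-iff} and~\ref{thm:same} without spelling out any details, and what you have written is precisely the natural elaboration of that one-line argument via the line-graph direction of Theorem~\ref{thm:same}. Your explicit verification that the line-graph reduction preserves the privileged structure (whereas the pendant-edge reduction does not) and your proof that $L(G)$ is a path iff $G$ is a path are exactly the gaps the paper leaves for the reader.
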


\section{Conclusions and Open Problems}
\label{sec:conclusions}

We have defined several versions of a graph relabeling problem,
including variants involving vertices, edges, and privileged labels,
and proved numerous results about the complexity of these problems,
answering several open problems along the way.  A number of
interesting open problems remain as follows:
\begin{itemize}
  \item Study other types of mutation functions where, for example, labels
along an entire path are mutated, or where labels can be reused.
  \item In the parallel setting, compute the sequence of mutations required
for the transformation of one labeling into another.  The parallel time for
computing the sequence could be much smaller than the sequential time
to execute the mutation sequence.

One result of interest in this direction is the problem of given a
labeled graph, a {\em prescribed flipping sequence}, and two
designated labels $l_1$ and $l_2$ are $l_1$ and $l_2$ flipped?  A
prescribed flipping sequence is an ordering of edges in which each
succeeding edge's labels may be flipped if and only if neither of its
labels has already been flipped.  This problem is {\em
NC}-equivalent to the Lexicographically First Maximal Matching
Problem, and so {\em CC}-complete; see~\cite{GrKaComplexity} for a
list of {\em CC}-complete problems.
  \item For various classes of graphs determine the probability of one
labelings evolving naturally into another.  Such an evolution of a
labeling could be used to model mutation periods.
  \item Study the properties of the graphs of all labelings.  In this graph
all labelings of a given graph are vertices and two vertices are
connected if they are one mutation apart.  Other conditions for edge
placement may also be worthwhile to examine.
  \item Determine if there is a version of the {\sc Edge Relabeling with
Privileged Labels Problem} that is {\em NP\/}-complete.
  \item Define the {\em cost of a mutation sequence\/} to be the sum of the
weights on all edges that are mutated.  Determine mutation sequences
that minimize the cost of evolving one labeling into another.  Explore
other cost functions.
\end{itemize}

\end{document}